\titleformat{\subsection}[runin]
{\bfseries} {\thesubsection{.}}{0.15cm}{}[.]
\titleformat{\subsubsection}[runin]
{\em}{\thesubsubsection{.}}{0.15cm}{}[.]
\newtheorem{theorem}{Theorem}[section]
\newtheorem{proposition}[theorem]{Proposition}
\newtheorem{corollary}[theorem]{Corollary}
\theoremstyle{definition}
\newtheorem{definition}[theorem]{Definition}
\newtheorem{problem}[theorem]{Problem}
\numberwithin{equation}{section}
\numberwithin{figure}{section}
\newcommand\Ocal{\mathcal{O}}
\newcommand\Cscr{\mathscr{C}}
\newcommand\Hscr{\mathscr{H}}
\newcommand\B{\mathbb{B}}
\newcommand\C{\mathbb{C}}
\newcommand\CP{\mathbb{CP}}
\newcommand\D{\mathbb D}
\newcommand\N{\mathbb{N}}
\renewcommand\P{\mathbb{P}}
\newcommand\R{\mathbb{R}}
\newcommand\Z{\mathbb{Z}}
\newcommand\igot{\mathfrak{i}}
\renewcommand\igot{\mathfrak{i}}
\renewcommand\imath{\igot}
\newcommand\hra{\hookrightarrow}
\newcommand\wt{\widetilde}
\newcommand\wh{\widehat}
\newcommand\dibar{\overline\partial}
\newcommand\Aut{\mathrm{Aut}}
\begin{document}

\fancyhead[LO]{Holomorphic embeddings and immersions of Stein manifolds: a survey}
\fancyhead[RE]{F.\ Forstneri\v c} 
\fancyhead[RO,LE]{\thepage}

\thispagestyle{empty}

\vspace*{1cm}
\begin{center}
{\bf\LARGE Holomorphic embeddings and immersions \\ of Stein manifolds: a survey}

\vspace*{0.4cm}

{\large\bf  Franc Forstneri\v c} 

\vspace*{4mm}
{\em Dedicated to Kang-Tae Kim for his sixtieth birthday}
\end{center}


\vspace*{1cm}

\begin{quote}
{\small
\noindent {\bf Abstract}\hspace*{0.1cm}
In this paper we survey results on the existence of holomorphic embeddings and
immersions of Stein manifolds into complex manifolds.  Most of them pertain to proper maps
into Stein manifolds. We include a new result saying that 
every continuous map $X\to Y$ between Stein manifolds is homotopic to a proper
holomorphic embedding provided that $\dim Y>2\dim X$ and 
we allow a homotopic deformation of the Stein structure on $X$. 

\vspace*{0.2cm}

\noindent{\bf Keywords}\hspace*{0.1cm} Stein manifold, embedding, density property, Oka manifold

\vspace*{0.1cm}


\noindent{\bf MSC (2010):}\hspace*{0.1cm}}  32E10, 32F10, 32H02, 32M17, 32Q28, 58E20, 14C30
%
%
\end{quote}


\section{Introduction} 
\label{sec:intro}

In this paper we review what we know about the existence of holomorphic embeddings
and immersions of Stein manifolds into other complex manifolds. The emphasis is on recent results, 
but we also include some classical ones for the sake of completeness and historical perspective. 
Recall that Stein manifolds are precisely the closed complex submanifolds of Euclidean spaces $\C^N$ 
(see Remmert \cite{Remmert1956},  Bishop \cite{Bishop1961AJM}, and Narasimhan \cite{Narasimhan1960AJM};
cf.\ Theorem \ref{th:classical}). Stein manifolds of dimension $1$ are open Riemann surfaces 
(see Behnke and Stein \cite{BehnkeStein1949}). A domain in $\C^n$ is Stein if and only if it is a domain of 
holomorphy (see Cartan and Thullen \cite{CartanThullen1932}). For more information, see the monographs \cite{Forstneric2017E,GrauertRemmert1979,GunningRossi2009,HormanderSCV}.  

In \S \ref{sec:Euclidean} we survey results on the existence of proper holomorphic immersions and embeddings 
of Stein manifolds into Euclidean spaces. 
Of special interest are the minimal embedding and immersion dimensions. 
Theorem \ref{th:EGS}, due to Eliashberg and Gromov \cite{EliashbergGromov1992AM} (1992)
and Sch\"urmann \cite{Schurmann1997} (1997), settles this question for Stein manifolds of dimension $>1$. 
It remains an open problem whether every open Riemann surface embeds holomorphically 
into $\C^2$; we describe its current status in \S \ref{ss:RS}. We also discuss the use of holomorphic 
automorphisms of Euclidean spaces in the construction of wild holomorphic embeddings  
(see \S \ref{ss:wild} and \S\ref{ss:complete}).

It has recently been discovered by Andrist et al. \cite{AndristFRW2016,AndristWold2014,Forstneric-immersions}
that there is a big class of Stein manifolds  $Y$ which contain every Stein manifold $X$ with $2\dim X< \dim Y$ 
as a closed complex submanifold (see Theorem \ref{th:density} ).
In fact, this holds for every Stein manifold $Y$ enjoying  Varolin's {\em density property}  \cite{Varolin2000,Varolin2001}:
the Lie algebra of all holomorphic vector fields on $Y$ is spanned by the $\C$-complete vector fields, 
i.e., those whose flow is an action of the additive group $(\C,+)$ by holomorphic automorphisms
of $Y$ (see Definition \ref{def:density}).  Since the domain $(\C^*)^n$ enjoys the volume density property,
we infer that every Stein manifold $X$ of dimension $n$ admits a proper holomorphic 
immersion to $(\C^*)^{2n}$ and a proper pluriharmonic map into $\R^{2n}$
(see Corollary \ref{cor:harmonic}). This provides a counterexample to the 
Schoen-Yau conjecture \cite{SchoenYau1997} for any  Stein  source manifold (see \S\ref{ss:S-Y}).

The class of Stein manifolds (in particular, of affine algebraic manifolds) with the density property is 
quite big and contains most complex Lie groups and homogeneous spaces,
as well as many nonhomogeneus manifolds.
This class has been the focus of intensive research during the last decade; we refer the reader to
the recent surveys \cite{KalimanKutzschebauch2015} and \cite[\S 4.10]{Forstneric2017E}. 
An open problem posed by Varolin \cite{Varolin2000,Varolin2001}
is whether every contractible Stein manifold with the density property is
biholomorphic to a Euclidean space. 

In \S \ref{sec:PSC} we recall a result of Drinovec Drnov\v sek and  
the author \cite{DrinovecForstneric2007DMJ,DrinovecForstneric2010AJM} 
to the effect that every smoothly bounded, strongly pseudoconvex Stein domain $X$ embeds properly
holomorphically into an arbitrary Stein manifold $Y$ with $\dim Y>2\dim X$. 
More precisely, every continuous map $\overline X\to Y$ which is holomorphic on $X$
is homotopic to a proper holomorphic embedding  $X\hra Y$ (see Theorem \ref{th:BDF2010}). 
The analogous result holds for immersions if $\dim Y\ge 2\dim X$, and
also for every $q$-complete manifold $Y$ with 
$q\in \{1,\ldots,\dim Y-2\dim X+1\}$, where the Stein case 
corresponds to $q=1$. This summarizes a long line of previous results.
In \S \ref{ss:Hodge} we mention a recent application of these techniques 
to the {\em Hodge conjecture} for the highest dimensional a priori nontrivial cohomology 
group of a $q$-complete manifold \cite{FSS2016}.
In \S\ref{ss:complete} we survey recent results on the existence of {\em complete}
proper holomorphic embeddings and immersions of strongly pseudoconvex domains into balls.
Recall that a submanifold of $\C^N$ is said to be {\em complete} if every divergent curve in it has
infinite Euclidean length.

In \S \ref{sec:soft} we show how the combination of the techniques
from \cite{DrinovecForstneric2007DMJ,DrinovecForstneric2010AJM} 
with those of Slapar and the author \cite{ForstnericSlapar2007MRL,ForstnericSlapar2007MZ} 
can be used to prove that, if $X$ and $Y$ are Stein manifolds and $\dim Y>2\dim X$, 
then every continuous map $X\to Y$ is homotopic to a proper
holomorphic embedding up to a homotopic deformation of the Stein structure on $X$  (see Theorem \ref{th:soft}).
The analogous result holds for immersions if $\dim Y\ge 2\dim X$, and 
for $q$-complete manifolds $Y$ with $q\le \dim Y-2\dim X+1$.
A result in a similar vein, concerning proper holomorphic 
embeddings of open Riemann surfaces into $\C^2$ up to a deformation
of their conformal structures, is due to Alarc\'on and L{\'o}pez \cite{AlarconLopez2013}
(a special case was proved in \cite{CerneForstneric2002}); see also 
Ritter \cite{Ritter2014} for embeddings into $(\C^*)^2$.

I have not included any topics from Cauchy-Riemann geometry 
since it would be impossible to properly discuss this major subject in the 
present survey of limited size and with a rather different focus.
The reader may wish to consult the recent survey by Pinchuk et al.\ \cite{Pinchuk2017},
the monograph by Baouendi et al.\ \cite{Baouendi1999} from 1999, and 
my survey \cite{Forstneric1993MN} from 1993.  For a new direction in this field,
see the papers by Bracci and Gaussier \cite{BracciGaussier2016X,BracciGaussier2017X}.

We shall be using the following notation and terminology. Let $\N=\{1,2,3,\ldots\}$.
We denote by $\D=\{z\in \C:|z|<1\}$ the unit disc in $\C$, by $\D^n\subset\C^n$ 
the Cartesian product of $n$ copies of $\D$ (the unit polydisc in $\C^n$), and by 
$\B^n=\{z=(z_1,\ldots,z_n)\in\C^n : |z|^2 = |z_1|^2+\cdots +|z_n|^2<1\}$ the unit ball in $\C^n$.
By $\Ocal(X)$ we denote the algebra of all holomorphic functions 
on a complex manifold $X$, and by $\Ocal(X,Y)$ the space of all holomorphic maps 
$X\to Y$ between a pair of complex manifolds; thus $\Ocal(X)=\Ocal(X,\C)$. These spaces carry the compact-open topology. 
This topology can be defined by a complete metric which renders them Baire spaces; 
in particular, $\Ocal(X)$ is a Fr\'echet algebra. (See \cite[p.\ 5]{Forstneric2017E} for more details.)
A compact set $K$ in a complex manifold $X$ is said to be {\em $\Ocal(X)$-convex} if 
$K=\wh K:= \{p\in X : |f(p)|\le \sup_K |f|\  \text{for every} \ f\in \Ocal(X)\}$.


\section{Embeddings and immersions of Stein manifolds into Euclidean spaces} 
\label{sec:Euclidean}

In this section we survey  results on proper holomorphic immersions and embeddings of Stein manifolds into 
Euclidean spaces. 

%
%
\subsection{Classical results}\label{ss:classical}
We begin by recalling the results of Remmert \cite{Remmert1956},  
Bishop \cite{Bishop1961AJM}, and Narasimhan \cite{Narasimhan1960AJM}
from the period 1956--1961.

\begin{theorem}\label{th:classical} 
Assume that $X$ is a Stein manifold of dimension $n$.
\begin{itemize}
\item[\rm (a)] If $N>2n$ then the set of proper embeddings $X\hra \C^N$ is dense in $\Ocal(X,\C^N)$.
\item[\rm (b)] If $N\ge 2n$ then the set of proper immersions $X\hra \C^N$ is dense in $\Ocal(X,\C^N)$.
\item[\rm (c)] If $N>n$ then the set of proper maps $X\to \C^N$ is dense in $\Ocal(X,\C^N)$.
\item[\rm (d)] If $N\ge n$ then the set of almost proper maps $X\to \C^N$ is residual in $\Ocal(X,\C^N)$.
\end{itemize}
\end{theorem}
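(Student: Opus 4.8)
The plan is to build the desired map by a single inductive approximation scheme over a compact exhaustion of $X$, using the Oka--Weil theorem as the engine and reading off the four dimension thresholds from general-position (transversality) counts. Since $X$ is Stein it carries a smooth strictly plurisubharmonic exhaustion $\rho$; choosing regular values $c_1<c_2<\cdots\to\infty$ and setting $K_j=\{\rho\le c_j\}$ produces an exhaustion $K_1\subset K_2^\circ\subset K_2\subset\cdots$ of $X$ by compact $\Ocal(X)$-convex sets. Starting from the given $f_0\in\Ocal(X,\C^N)$ that we wish to approximate, I would construct holomorphic maps $f_j$ by correcting $f_{j-1}$ only near the annulus $A_j=K_j\setminus K_{j-1}^\circ$, always keeping $\|f_j-f_{j-1}\|$ below a prescribed summable $\epsilon_j$ on $K_{j-1}$. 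Each correction is a global holomorphic map furnished by Oka--Weil: because $A_j$ lies outside the $\Ocal(X)$-convex set $K_{j-1}$, one can add functions as large as desired on $A_j$ while remaining arbitrarily small on $K_{j-1}$. The limit $f=\lim_j f_j$ then exists in $\Ocal(X,\C^N)$ and is as close to $f_0$ as we please on every compact set, which gives the density statements (a)--(c); for (d) the more robust almost-properness (every connected component of $f^{-1}(L)$ is compact for each compact $L$) is a $G_\delta$ condition on $\Ocal(X,\C^N)$, so a Baire-category argument upgrades density to residuality.

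The growth step is where properness is forced: at stage $j$ I would arrange $\|f_j\|>j$ on $A_j$, so that after the summable tail corrections the limit satisfies $\|f\|\to\infty$ at the ends of $X$ and is therefore proper. For parts (c) and (d) this is essentially all that is needed, and the distinction between the two is exactly the role of the threshold: for $N>n$ the growth map can be kept genuinely proper (the generic fibre is $0$-dimensional and the image closed), whereas at the borderline $N=n$ one can in general guarantee only almost properness, the map being proper onto its image which need not be closed in $\C^N$. This accounts for the weaker (residual rather than dense) conclusion in (d).

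To promote the proper map to an immersion or embedding I would superimpose general-position corrections on each $K_j$, again realized holomorphically by Oka--Weil so that they lie in $\Ocal(X,\C^N)$ and can be taken small on $K_{j-1}$. For the immersion conclusion (b) the relevant obstruction is the degeneracy locus $\Sigma=\{A\in\C^{N\times n}:\mathrm{rank}\,A<n\}$, of complex codimension $N-n+1$ in the space of $N\times n$ matrices; a generic perturbation makes $x\mapsto df_x$ avoid $\Sigma$ over the $n$-dimensional $X$ exactly when $N-n+1>n$, i.e.\ $N\ge 2n$. For the embedding conclusion (a) I would, in addition, force the difference map $\delta(x,y)=f(x)-f(y)$ on the $2n$-dimensional manifold $(X\times X)\setminus\Delta$ to avoid $0\in\C^N$; since $\{0\}$ has codimension $N$, a generic perturbation achieves this precisely when $N>2n$, and together with properness and the full-rank property this yields a proper embedding. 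Both counts define open, stable conditions on compacts, so they pass to the limit $f$ on each $K_j$ and hence on all of $X$.

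The main difficulty I anticipate is not any single step but their compatibility: the growth corrections pushing $\|f\|$ to infinity and the general-position corrections securing injectivity and full rank must be performed simultaneously on each annulus without one undoing the other, and all of them must be small enough on the previously treated sets that the scheme converges and the limit retains properness. Technically this rests on carrying out holomorphic (rather than merely smooth) transversality: one must know that the jet perturbations needed for general position can be realized by global holomorphic maps controlled on $K_{j-1}$, which is exactly what Oka--Weil approximation on $\Ocal(X)$-convex sets provides. Ensuring that the transversality arguments remain valid within the restricted class $\Ocal(X,\C^N)$ --- so that the ``generic'' perturbations are genuinely available holomorphically --- is, I expect, the crux of the argument.
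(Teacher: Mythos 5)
Your overall architecture --- exhaustion by compact $\Ocal(X)$-convex sets, inductive corrections kept small on the previously treated sets, and jet-transversality to upgrade proper maps to immersions ($N\ge 2n$) and embeddings ($N>2n$) --- is the same as the paper's, and your dimension counts agree with Proposition \ref{prop:generic}. The genuine gap is in the step you call the engine. First, as stated the growth step is self-contradictory: $A_j=K_j\setminus K_{j-1}^\circ$ meets $K_{j-1}$ along $bK_{j-1}$, so a map with $\|f_j-f_{j-1}\|<\epsilon_j$ on $K_{j-1}$ cannot also satisfy $\|f_j\|>j$ on $A_j$ unless $f_{j-1}$ was already that large on $bK_{j-1}$, which your induction (largeness $>j-1$ one step earlier) does not supply. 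Second, and more fundamentally, the justification ``because $A_j$ lies outside the $\Ocal(X)$-convex set $K_{j-1}$, one can add functions as large as desired on $A_j$ while remaining arbitrarily small on $K_{j-1}$'' is not what Oka--Weil provides: Oka--Weil requires the union of the set where you prescribe smallness and the set where you prescribe largeness to be $\Ocal(X)$-convex, and here it never is. Even the natural repair --- demanding largeness only on the outer boundary $bK_j$ --- cannot be obtained this way, since by the maximum principle every $g\in\Ocal(X)$ satisfies $\sup_{K_j}|g|=\sup_{bK_j}|g|$, so the $\Ocal(X)$-hull of $K_{j-1}\cup bK_j$ contains all of $K_j$. (For $\C^N$-valued maps there is no such obstruction on the \emph{minimum} of the norm, so the desired maps do exist; but producing them is precisely the nontrivial content of the classical proof, not a black-box Oka--Weil application.) The paper's proof handles exactly this point: in the first method, largeness is required only on $bK_{j+1}$, namely $|f_{j+1}|>1+\sup_{K_j}|f_j|$, together with a carefully formulated ``not too small on $K_{j+1}\setminus K_j$'' condition; in Bishop's method one first constructs an almost proper map $h\colon X\to\C^n$ via special analytic polyhedra, and the crucial observation is that the bad set $L_j=\{x\in\overline P_{j+1}\setminus P_j: |h(x)|\le a_{j-1}\}$ where $h$ fails to be large has the property that $\overline P_{j-1}\cup L_j$ \emph{is} $\Ocal(X)$-convex --- a consequence of the almost properness of $h$, and the very thing that legitimizes the Oka--Weil step for the extra component $g$, yielding the proper map $(h,g)\colon X\to\C^{n+1}$.

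Part (d) is likewise not addressed by your scheme. Residuality is not a soft Baire upgrade: the substance of (d) is the density of almost proper maps already at the borderline $N=n$, which is Bishop's theorem and rests on constructing an exhaustion of $X$ by unions of special analytic polyhedra on which $h$ is proper onto polydiscs $a_j\D^n$. Your proposal contains no mechanism producing this (your growth corrections cannot operate at $N=n$), and your explanation of the threshold for (c) --- generic fibres being $0$-dimensional and the image closed --- is not how $N>n$ actually enters; it enters through the single extra component $g$ adjoined to an almost proper $h$ as above. A small further inaccuracy: ``almost proper'' means every connected component of $f^{-1}(L)$ is compact for each compact $L\subset\C^N$, which is not the same as being proper onto a (possibly non-closed) image.
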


A proof  of these results can also be found in the monograph by Gunning and Rossi \cite{GunningRossi2009}.

Recall that a set in a Baire space (such as $\Ocal(X,\C^N)$) is said to be {\em residual}, or a set
{\em of second category}, if it is the intersection of at most countably many open everywhere dense sets.
Every residual set is dense. A property of elements in a Baire space is said to be {\em generic} if it 
holds for all elements in a residual set.

The density statement for embeddings and immersions
is an easy consequence of the following result which follows from the jet transversality theorem 
for holomorphic maps. (See Forster \cite{Forster1970} for maps to Euclidean spaces and Kaliman and Zaidenberg 
\cite{KalimanZaidenberg1996TAMS} for the general case. A more complete
discussion of this topic can be found in \cite[\S 8.8]{Forstneric2017E}.) 
Note also that maps which are immersions or embeddings on a 
given compact set constitute an open set in the corresponding mapping space.

\begin{proposition}\label{prop:generic}
Assume that $X$ is a Stein manifold, $K$ is a compact set in $X$, and $U\Subset X$ is an open 
relatively compact set containing $K$. If $Y$ is a complex manifold such that
$\dim Y>2\dim X$, then every holomorphic map $f\colon X\to Y$ can be approximated uniformly
on $K$ by holomorphic embeddings $U\hra Y$. If $2\dim X \le \dim Y$ then $f$ can be approximated
by holomorphic immersions $U\to Y$.
\end{proposition}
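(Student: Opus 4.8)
The plan is to deduce both assertions from the holomorphic jet transversality theorem of Forster \cite{Forster1970} and Kaliman--Zaidenberg \cite{KalimanZaidenberg1996TAMS}, whose hypotheses are met precisely because $X$ is Stein (so there are enough holomorphic functions to perturb $f$), together with the remark recorded before the statement that being an immersion, resp.\ an embedding, on a fixed compact set is an open condition in $\Ocal(X,Y)$. Write $n=\dim X$ and $m=\dim Y$. Since these conditions are open on $\overline U$, it suffices to find, arbitrarily close to $f$ uniformly on $K$, a holomorphic map that is an immersion, resp.\ an injective immersion, on a neighbourhood of $\overline U$; restriction then yields the conclusion on $U$ (an injective immersion on the compact $\overline U$ being a homeomorphism onto its image).

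For the immersion statement I would pass to the $1$-jet bundle $J^1(X,Y)$, whose fibre is the space of linear maps $\C^n\to\C^m$, and single out the subvariety $\Sigma\subset J^1(X,Y)$ of jets whose linear part has rank $<n$. The locus of $m\times n$ matrices of rank $<n$ has codimension $m-n+1$, so $\Sigma$ has codimension $m-n+1$ in the jet bundle. Jet transversality yields, arbitrarily near $f$ on $K$, a holomorphic $g$ with $j^1 g\colon X\to J^1(X,Y)$ transverse to $\Sigma$; then $(j^1 g)^{-1}(\Sigma)$ has dimension $n-(m-n+1)=2n-m-1$, which is negative as soon as $m\ge 2n$. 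Hence $g$ misses $\Sigma$ and is an immersion, and the immersion case follows.

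For the embedding statement I would superimpose a Whitney-type double-point argument. Off the diagonal, consider the holomorphic map $(x,x')\mapsto(g(x),g(x'))$ from $X\times X\setminus\Delta_X$ into $Y\times Y$ and demand transversality to the diagonal $\Delta_Y$, which has codimension $m$. The source has dimension $2n$, so a transverse map meets $\Delta_Y$ in a set of dimension $2n-m$; when $m>2n$ this is empty, giving $g(x)\ne g(x')$ for all off-diagonal pairs. This transversality is again produced by a two-point version of the jet transversality theorem, carried out simultaneously with the immersion adjustment of the previous step.

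The step I expect to be the main obstacle is the passage across the diagonal. As $x'\to x$ the off-diagonal map tends to $\Delta_Y$, so transversality to $\Delta_Y$ cannot be imposed there; instead, injectivity in a neighbourhood of the diagonal must come from the immersion property, which controls it infinitesimally. Matching the off-diagonal transversality to the near-diagonal immersion estimate uniformly over the compact set $\overline U$ is the delicate point, and it is exactly where the strict inequality $m>2n$ (rather than $m\ge 2n$) is forced. Granting this double-point argument, the proposition reduces to the dimension counts above together with the openness of the two conditions.
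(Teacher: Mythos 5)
Your proposal is correct and takes essentially the same route as the paper, which obtains the proposition from the holomorphic jet transversality theorem of Forster \cite{Forster1970} and Kaliman--Zaidenberg \cite{KalimanZaidenberg1996TAMS} combined with the observation that being an immersion, resp.\ an embedding, on a fixed compact set is an open condition in the mapping space. The dimension counts you give (codimension $m-n+1$ for the rank-deficiency locus in the $1$-jet bundle, codimension $m$ for the diagonal in $Y\times Y$, with the strict inequality $m>2n$ needed only for the double-point argument) and the two-step treatment of double points off and near the diagonal are exactly the standard details behind the paper's citation.
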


Proposition \ref{prop:generic} fails in general without shrinking the domain of the map,
for otherwise it would yield nonconstant holomorphic maps of $\C$ to any complex manifold
of dimension $>1$ which is clearly false. On the other hand, it holds without shrinking the domain of the map
if the target manifold $Y$ satisfies a suitable holomorphic flexibility property, in particular, if it is an {\em Oka manifold}.
See \cite[Chap.\ 5]{Forstneric2017E} for the definition of this class of complex manifolds
and \cite[Corollary 8.8.7]{Forstneric2017E} for the mentioned result.

In the proof of Theorem \ref{th:classical}, parts (a)--(c),
we exhaust $X$ by a sequence $K_1\subset K_2\subset \cdots$ of compact $\Ocal(X)$-convex 
sets and approximate  the holomorphic map $f_j\colon X\to\C^N$ in the inductive step, uniformly on $K_j$, 
by a holomorphic map $f_{j+1}\colon X\to\C^N$ whose norm $|f_{j+1}|$ is not too small on $K_{j+1}\setminus K_j$
and such that $|f_{j+1}(x)|>1+\sup_{K_j} |f_j|$ holds for all $x\in bK_{j+1}$. If  the approximation is close enough at every step
then the sequence $f_j$ converges to a proper holomorphic map $f=\lim_{j\to\infty} f_j\colon X\to\C^N$.
If $N>2n$ then every map $f_j$ in the sequence can be made an embedding on $K_{j}$
(immersion in $N\ge 2n$) by Proposition \ref{prop:generic}, and hence the limit map $f$ is also such.

A more efficient way of constructing proper maps, immersions and embeddings of Stein manifolds into
Euclidean space was introduced by Bishop \cite{Bishop1961AJM}. He showed that any holomorphic map $X\to\C^n$ 
from an $n$-dimensional Stein manifold $X$ can be approximated uniformly on compacts by 
{\em almost proper} holomorphic maps $h\colon X\to \C^n$; see Theorem \ref{th:classical}(d).
More precisely, there is an increasing sequence $P_1\subset P_2\subset \cdots\subset X$ 
of relatively compact open sets exhausting $X$ such that every $P_j$ 
is a union of finitely many special analytic polyhedra 
and $h$ maps $P_j$ properly onto a polydisc $a_j \D^n\subset \C^n$, 
where $0<a_1<a_2<\ldots$ and $\lim_{j\to\infty} a_j=+\infty$.
We then obtain a proper map $(h,g)\colon X\to \C^{n+1}$ by choosing  $g\in \Ocal(X)$
such that for every $j\in\N$ we have $g>j$ on the compact set $L_j=\{x\in \overline P_{j+1}\setminus P_j : |h(x)|\le a_{j-1}\}$;
since $\overline P_{j-1}\cup L_j$ is $\Ocal(X)$-convex, this is possible by inductively using the Oka-Weil theorem. 
One can then find proper immersions and embeddings by adding a suitable number of additional components 
to $(h,g)$ (any such map is clearly proper) and using Proposition \ref{prop:generic} and the 
Oka-Weil theorem inductively. 

The first of the above mentioned procedures easily adapts to give a proof of the following interpolation theorem due to 
Acquistapace et al.\ \cite[Theorem 1]{Acquistapace1975}. Their result also pertains
to Stein spaces of bounded embedding dimension.

%
%
%
%
\begin{theorem} \label{th:ABT}
{\rm \cite[Theorem 1]{Acquistapace1975}}
Assume that $X$ is an $n$-dimensional  Stein manifold, $X'$ is a closed complex 
subvariety of $X$, and $\phi\colon X'\hra \C^N$ is a proper holomorphic 
embedding  for some $N > 2n$. Then the set of all proper holomorphic embeddings 
$X\hra \C^N$ that extend $\phi$ is  dense in the space of all holomorphic maps $X \to\C^N$ extending 
$\phi$. The analogous result holds for proper holomorphic immersions $X\to \C^N$ when $N\ge 2n$.
\end{theorem}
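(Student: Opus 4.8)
\section*{Proof proposal}

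The plan is to adapt the first of the two classical procedures recalled above, namely the exhaustion argument used for Theorem~\ref{th:classical}(a),(b), imposing at every step the additional constraint that the map agree with $\phi$ along $X'$. The two analytic inputs required are an interpolation form of the Oka--Weil theorem and an interpolation form of Proposition~\ref{prop:generic}. First I would fix one holomorphic extension $f_0\in\Ocal(X,\C^N)$ of $\phi$ (one exists because the restriction $\Ocal(X,\C^N)\to\Ocal(X',\C^N)$ is surjective by Cartan's Theorem~B applied to the ideal sheaf $\Ical_{X'}$) together with a compact set $K_0$ on which approximation is desired, and exhaust $X$ by an increasing sequence $K_0\subset K_1\subset K_2\subset\cdots$ of $\Ocal(X)$-convex compacts, each contained in the interior of the next, with $\bigcup_j K_j=X$. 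I then aim to construct a sequence $f_j\in\Ocal(X,\C^N)$ with $f_j|_{X'}=\phi$ for every $j$, with $f_{j+1}$ uniformly close to $f_j$ on $K_j$, with $|f_{j+1}|>1+\sup_{K_j}|f_j|$ on $bK_{j+1}$, and with $f_j$ an embedding (immersion when $N\ge 2n$) on $K_j$. If the approximations are close enough, the limit $f=\lim_j f_j$ is a proper holomorphic embedding extending $\phi$ and approximating $f_0$ on $K_0$, which is what is claimed.

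The properness step rests on the following interpolated Oka--Weil statement: if $K$ is $\Ocal(X)$-convex and $g$ is holomorphic near $K$ with $g=\phi$ on $X'$ near $K$, then $g$ is approximable uniformly on $K$ by global $G\in\Ocal(X,\C^N)$ with $G|_{X'}=\phi$. I would prove this by subtracting the fixed extension $f_0$, which reduces the problem to approximating a holomorphic section of $\Ical_{X'}^{\oplus N}$ defined near $K$ by a global section of $\Ical_{X'}^{\oplus N}$; such approximation holds because $K$ is $\Ocal(X)$-convex, as a consequence of Cartan's theory of coherent sheaves on Stein manifolds. Feeding this into the mechanism of the classical proof (choosing the new components to be large on $bK_{j+1}$ via the Oka--Weil theorem, now with interpolation) lets me force the modulus toward infinity on $bK_{j+1}$ while preserving both the approximation on $K_j$ and the condition $G|_{X'}=\phi$.

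The remaining, and main, obstacle is the general-position step carried out \emph{relative to} $X'$: I must show that inside the affine space $\{F\in\Ocal(X,\C^N):F|_{X'}=\phi\}$ the maps that are embeddings on a fixed $K$ are dense (openness being clear). The admissible perturbations form the module $\Ical_{X'}(X)^{\oplus N}$ of maps vanishing on $X'$, and the key observation is that at any point $x\notin X'$ one can multiply by a function in $\Ical_{X'}(X)$ that is nonvanishing at $x$, giving perturbation freedom in all $N$ coordinate directions; hence the jet-transversality argument behind Proposition~\ref{prop:generic} applies verbatim away from $X'$. Thus the immersion condition (requiring $N\ge 2n$) and the emptiness of the double-point locus $\{(x_1,x_2)\in K\times K\setminus\Delta:F(x_1)=F(x_2)\}$ (requiring $N>2n$) can be arranged on $K\setminus X'$ exactly as in the unconstrained case.

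The behaviour along $X'$ itself needs separate attention, and this is where I expect the real work to lie. There the tangential jet of $F$ is pinned down by $\phi$, but the jets transverse to $X'$ remain free, being realized by sections of $\Ical_{X'}$ and $\Ical_{X'}^2$; using this freedom, together with $N>2n$, one extends $\phi$ to a map whose differential is injective in a neighbourhood of $X'$, so that $F$ is an immersion there. The mixed double points, pairs $(x_1,x_2)$ with $x_1\in X'$ and $x_2\notin X'$, are excluded by making the map $(x_1,x_2)\mapsto F(x_2)-\phi(x_1)$ transverse to $0\in\C^N$; this is possible by perturbing $F$ freely at $x_2$, and since its source has real dimension at most $4n<2N$ transversality to a point forces the locus to be empty. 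Assembling the immersion condition near $X'$ with the transversality conditions on $K\setminus X'$, and inserting the outcome into the inductive scheme of the first two paragraphs, completes the proof. The immersion version is identical, with the double-point conditions dropped and the hypothesis relaxed to $N\ge 2n$.
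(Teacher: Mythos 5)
Your skeleton is exactly the adaptation the paper intends (it proves Theorem \ref{th:ABT} by rerunning the exhaustion scheme of Theorem \ref{th:classical}(a),(b) with interpolation), and two of your three ingredients are sound: Oka--Weil with interpolation, obtained by approximating sections of the coherent sheaf $\Ical_{X'}^{\oplus N}$ on $\Ocal(X)$-convex compacts, and relative general position, using that perturbations from $\Ical_{X'}(X)^{\oplus N}$ span all directions of $\C^N$ at every point off $X'$. But there is a genuine gap in the properness step, and it is betrayed by the fact that your argument never invokes the hypothesis that $\phi$ is \emph{proper} --- a hypothesis that is necessary for the conclusion (a proper map $F\colon X\to\C^N$ restricts to a proper map on the closed subvariety $X'$), so no argument omitting it can be complete. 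Concretely, the step that fails is the claim that interpolated Oka--Weil ``lets me force the modulus toward infinity on $bK_{j+1}$ while preserving $G|_{X'}=\phi$.'' At a point $x\in bK_{j+1}\cap X'$ every admissible map takes the fixed value $\phi(x)$, so the requirement $|f_{j+1}|>1+\sup_{K_j}|f_j|$ on $bK_{j+1}$ cannot be arranged by any perturbation when $|\phi(x)|$ happens to be smaller than that bound; worse, since your perturbations lie in $\Ical_{X'}(X)^{\oplus N}$, they are uniformly small on a whole neighborhood of $X'$, so the obstruction persists on the part of $bK_{j+1}$ near $X'$. With the exhaustion $K_0\subset K_1\subset\cdots$ fixed in advance, as in your first paragraph, the induction cannot even be started in general.

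The missing idea is that the exhaustion must be chosen \emph{inside} the induction, adapted to $\phi$ through its properness. Having constructed $f_j$ and $K_j$, set $C_j=2+\sup_{K_j}|f_j|$; the set $\{x\in X': |\phi(x)|\le C_j\}$ is compact because $\phi$ is proper, so you may choose the next $\Ocal(X)$-convex compact $K_{j+1}$ so large that this set lies in $\mathring K_{j+1}$. Then $|f_j|=|\phi|>C_j$ holds automatically on $X'\cap bK_{j+1}$, hence on a neighborhood $V$ of this set by continuity (recall $f_j=\phi$ on $X'$), and the interpolating perturbation is only required to be large on the compact set $bK_{j+1}\setminus V$, which has positive distance from $X'$; there the classical peaking-and-generic-combination construction goes through with functions taken from the ideal, provided it is run relative to the $\Ocal(X)$-convex set $K_j\cup(X'\cap K_{j+1})$ rather than $K_j$ alone, while the perturbation is kept small on $V$ so as not to destroy the bound already supplied by $\phi$. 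With this modification your induction closes. A smaller caveat: $X'$ is only assumed to be a subvariety, so your discussion of ``tangential'' versus ``transverse'' jets along $X'$ must be phrased via Zariski tangent spaces at points of the singular locus; there $dF_x$ is already determined and injective on $T_xX'$ because $\phi$ is an embedding, and the differentials $dg_x$ with $g\in\Ical_{X'}(X)$ realize exactly the covectors annihilating $T_xX'$, which is the freedom your argument needs.
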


This interpolation theorem fails when $N<2n$. Indeed, for every $n>1$ there exists a proper holomorphic   
embedding $\phi \colon \C^{n-1} \hra \C^{2n-1}$ such that $\C^{2n-1}\setminus \phi(\C^{n-1})$
is Eisenman $n$-hyperbolic, so $\phi$ does not extend to an injective holomorphic map $f\colon \C^n\to \C^{2n-1}$
(see \cite[Proposition 9.5.6]{Forstneric2017E}; this topic is discussed in \S\ref{ss:wild}).  
The answer to the interpolation problem for embeddings seems unknown in the borderline case $N=2n$.

%
%
\subsection{Embeddings and immersions into spaces of minimal dimension}
\label{ss:minimal}
After Theorem \ref{th:classical} was proved in the early 1960's, one of the main questions 
driving this theory during the next decades was to find the smallest number $N=N(n)$ 
such that every Stein manifold $X$ of dimension $n$ embeds or immerses
properly holomorphically into $\C^N$. The belief that a Stein manifold of complex dimension $n$ admits proper holomorphic 
embeddings to Euclidean spaces of dimension smaller than $2n+1$ was based on the observation
that such a manifold is homotopically equivalent to a CW complex of dimension 
at most $n$; this follows from Morse theory 
(see Milnor \cite{Milnor1963}) and the existence of strongly plurisubharmonic Morse exhaustion functions 
on $X$ (see Hamm \cite{Hamm1983} and \cite[\S 3.12]{Forstneric2017E}). This problem, which was investigated 
by Forster \cite{Forster1970}, Eliashberg and Gromov \cite{EliashbergGromov1971}
and others, gave rise to major new methods in Stein geometry. 
Except in the case $n=1$ when $X$ is an open Riemann surface, 
the following optimal answer was given by Eliashberg and Gromov  \cite{EliashbergGromov1992AM} in 1992,
with an improvement by one for odd values on $n$ due to Sch{\"u}rmann \cite{Schurmann1997}.

%
%
%
%
\begin{theorem}
\label{th:EGS} 
{\em \cite{EliashbergGromov1992AM,Schurmann1997}}
Every Stein manifold $X$ of dimension $n$ immerses properly holomorphically into
$\C^M$ with $M = \left[\frac{3n+1}{2}\right]$, and if $n>1$ then $X$ embeds properly holomorphically
into $\C^N$ with $N = \left[\frac{3n}{2}\right]+ 1$.
\end{theorem}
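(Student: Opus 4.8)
The plan is to prove both statements by a single inductive scheme that reduces the construction of a proper immersion (resp.\ embedding) \emph{at the minimal dimension} to a sequence of local extension problems solved by a holomorphic $h$-principle. The transversality argument behind Theorem \ref{th:classical} and Proposition \ref{prop:generic} is unavailable here, since it requires $\dim_\C Y>2n$; below that threshold a generic perturbation no longer removes the singular or double locus, and one must instead exploit the flexibility of the Euclidean target together with the fact that a Stein manifold carries very little topology.

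First I would fix a strictly plurisubharmonic Morse exhaustion $\rho\colon X\to\R$ (these exist by Hamm \cite{Hamm1983}, as recalled in \S\ref{ss:minimal}); its critical points have index $\le n$, so the sublevel sets $X_c=\{\rho<c\}$ are obtained from one another by attaching handles of index $\le n$, and $X$ has the homotopy type of a CW complex of dimension $\le n$. I would then build the map inductively over the values $c$, maintaining at each stage a holomorphic immersion (resp.\ injective immersion) of a neighbourhood of $\overline{X_c}$ into $\C^M$ whose image is pushed far out near the boundary. Between consecutive critical values the topology is unchanged and the map is carried across by Oka--Weil approximation, exactly as in the exhaustion argument sketched after Theorem \ref{th:classical}, while simultaneously arranging the growth needed for properness; the entire difficulty is therefore concentrated in the passage through a single critical point, i.e.\ in extending the map across one handle of index $k\le n$.

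The heart of the matter is this extension, which I would treat as a section-extension problem over the core $k$-disk and solve by the Gromov--Oka principle for ample differential relations over Stein manifolds with flexible (here Euclidean) target. For immersions the relevant relation consists of the $1$-jets with injective differential; its fibre is homotopy equivalent to the complex Stiefel manifold $V_n(\C^M)$ of $n$-frames in $\C^M$, which is $2(M-n)$-connected. Extending a fibrewise injective frame across a $k$-handle rel its attaching sphere $S^{k-1}$ meets an obstruction in $\pi_{k-1}\bigl(V_n(\C^M)\bigr)$; since $k\le n$, the primary obstruction vanishes once $2(M-n)\ge n-1$. The optimal value $M=\lfloor (3n+1)/2\rfloor$ is then pinned down by the secondary, characteristic-class obstruction to splitting off a trivial normal bundle of rank $M-n$ from $TX$ (which for odd $n$ forces one extra dimension, as one already sees from the Chern-class relations $c(TX)\,c(\nu)=1$), the sharp analysis being due to Sch\"urmann \cite{Schurmann1997}. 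The Oka principle then converts the resulting continuous (formal) immersion into a genuine holomorphic one approximating the previous stage on $\overline{X_c}$, so the induction continues and the limit $f=\lim_{j\to\infty}f_{c_j}$ is a proper holomorphic immersion.

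For embeddings one must additionally keep the map injective, which is a two-point rather than a one-jet condition. I would encode injectivity as the nonvanishing of the difference map $\delta f(x,y)=f(x)-f(y)$ on the deleted product $X\times X\setminus\Delta$; as $\delta f$ is equivariant for the involution swapping $x,y$ and the antipodal action on $\C^N\setminus\{0\}\simeq S^{2N-1}$, this becomes a $\Z/2$-equivariant section problem with high-dimensional spherical fibre. Coupling the off-diagonal condition with the one-jet (immersion) condition along the diagonal --- most transparently after blowing up $X\times X$ along $\Delta$ --- produces a single relation whose solvability over the handles of index $\le n$ forces the embedding dimension up to $N=\lfloor 3n/2\rfloor+1$. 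The main obstacle, and the technically hardest step, is precisely this simultaneous control of the immersion data and the double-point data at a dimension below the transversality range $2n+1$: one cannot remove double points by a generic perturbation and must instead eliminate them by a holomorphic Whitney-type homotopy furnished by the $h$-principle, all the while preserving, through the Oka--Weil approximation on the exhaustion, the properness of the limit map (see \cite{EliashbergGromov1992AM}).
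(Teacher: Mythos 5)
Your proposal has two genuine gaps, one of them fatal. First, the embedding half rests on a tool that does not exist: a ``holomorphic Whitney-type homotopy'' removing double points, obtained from an h-principle on the deleted product $X\times X\setminus\Delta$. Injectivity is not a local, open (ample) differential relation, so Gromov's machinery --- which does underlie the h-principle for \emph{non-proper} holomorphic immersions and makes the formal part of your immersion discussion reasonable --- simply does not apply to it; the equivariant deleted-product criterion you are transplanting is the Haefliger--Weber theorem of smooth/PL topology, and it has no holomorphic analogue (holomorphic rigidity leaves no room for a local Whitney trick). Second, properness: your plan to ``arrange the growth needed for properness'' between critical levels by Oka--Weil approximation, as in the proof of Theorem \ref{th:classical}, breaks down precisely because we are below the transversality range. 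Each outward-pushing modification destroys the immersion/injectivity conditions already achieved, and since $M=\left[\frac{3n+1}{2}\right]\le 2n$ and $N=\left[\frac{3n}{2}\right]+1\le 2n$ for $n>1$, Proposition \ref{prop:generic} cannot restore them, so your induction does not close. (A smaller inaccuracy: you assert that for odd $n$ a secondary characteristic-class obstruction forces the extra dimension in the immersion case; no such obstruction is known --- by Forster's analysis \cite{Forster1970}, recalled in \S\ref{ss:minimal}, the minimal proper immersion dimension for odd $n$ is still undetermined between two possible values.)

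The actual proof of Theorem \ref{th:EGS} resolves both difficulties by a different, global scheme. One starts with Bishop's almost proper map $h\colon X\to\C^n$ (Theorem \ref{th:classical}(d)), which takes care of properness from the outset and reduces injectivity to separating points on the finite fibres of $h$. One then takes many functions $\tilde g_1,\ldots,\tilde g_{\tilde q}\in\Ocal(X)$ doing the job and seeks few combinations $g_j=\sum_{k} a_{j,k}\tilde g_k$ with $a_{j,k}\in\Ocal(X)$ (taking $a_{j,k}=b_{j,k}\circ h$ with $b_{j,k}\in\Ocal(\C^n)$ in the point-separation steps), such that $f=(h,g_1,\ldots,g_q)$ is a proper immersion or embedding. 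The immersion and separation conditions say that the graph of the coefficient matrix avoids a subvariety $\Sigma$ of $E=X\times\C^{q\tilde q}$ with algebraic fibres; the crucial point is that for $q\ge\left[\frac{n}{2}\right]+1$ these fibres are finite unions of affine subspaces of codimension $\ge 2$, so $E\setminus\Sigma\to X$ is a stratified holomorphic fibre bundle with Oka fibres, and Gromov's Oka principle \cite{Gromov1989} converts a continuous section avoiding $\Sigma$ into a holomorphic one. The construction is run by induction over the strata of a stratification of $X$ equisingular with respect to $h$, not over the handles of a Morse exhaustion. In short, where you attempt to solve a (nonexistent) embedding h-principle handle by handle while simultaneously forcing properness, the proof of \cite{EliashbergGromov1992AM,Schurmann1997} builds properness into the base map once and for all and linearizes the remaining conditions into a section-extension problem to which an Oka principle genuinely applies.
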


Sch\"urmann \cite{Schurmann1997} also found optimal embedding dimensions for Stein spaces with singularities 
and with bounded embedding dimension.

The key ingredient in the proof of Theorem \ref{th:EGS} 
is a certain major extension of the Oka-Grauert theory, due to Gromov 
whose 1989 paper \cite{Gromov1989} marks the beginning of {\em modern Oka theory}. 
(See \cite{ForstnericLarusson2011} for an introduction to Oka theory and 
\cite{Forstneric2017E} for a complete account.)

Forster  showed in \cite[Proposition 3]{Forster1970} that the embedding dimension 
$N=\left[\frac{3n}{2}\right]+ 1$ is the minimal possible for every $n>1$, and the immersion
dimension $M=\left[\frac{3n+1}{2}\right]$ is minimal for every even $n$, while for odd $n$
there could be two possible values. (See also \cite[Proposition 9.3.3]{Forstneric2017E}.) 
In 2012, Ho et al.\ \cite{HoJacobowitzLandweber2012} 
found new examples showing that these dimensions are optimal 
already for Grauert tubes around compact totally real submanifolds,
except perhaps for immersions with odd $n$.
A more complete discussion of this topic and a self-contained proof  of Theorem  \ref{th:EGS}
can be found in \cite[Sects.\ 9.2--9.5]{Forstneric2017E}. 
Here we only give a brief outline of the main ideas used in the proof.

One begins by choosing a sufficiently generic almost proper  map $h\colon X\to \C^{n}$ 
(see Theorem \ref{th:classical}(d)) and then tries to find
the smallest possible number of functions $g_1,\ldots, g_q\in \Ocal(X)$ such that the map
\begin{equation}\label{eq:f}
	f=(h,g_1,\ldots,g_q)\colon X\to \C^{n+q}
\end{equation} 
is a proper embedding or immersion. Starting with a big number of functions 
$\tilde g_1,\ldots, \tilde g_{\tilde q}\in \Ocal(X)$ which do the job, we try to reduce their number 
by applying a suitable fibrewise linear projection 
onto a smaller dimensional subspace, where the projection  depends holomorphically on the base point.  
Explicitly, we look for functions 
\begin{equation}\label{eq:ajk}
	g_j=\sum_{k=1}^{\tilde q} a_{j,k}\tilde g_k, \qquad a_{j,k}\in \Ocal(X),\ \ j=1,\ldots,q
\end{equation} 
such that the map \eqref{eq:f} is a proper embedding or immersion.
In order to separate those pairs of points in $X$ which are not separated by the base map $h\colon X\to \C^{n}$,
we consider coefficient functions of the form  $a_{j,k}=b_{j,k}\circ h$ where $b_{j,k}\in\Ocal(\C^n)$
and $h\colon X\to\C^n$ is the chosen base almost proper map.

This outline cannot be applied directly since the base map  $h\colon X\to\C^n$ may have too complicated behavior.
Instead, one proceeds by induction on strata in a suitably chosen complex analytic stratification of $X$ which is 
equisingular with respect to $h$. The induction steps are of two kinds. 
In  a step of the first kind we find a map $g=(g_1,\ldots,g_q)$ \eqref{eq:ajk} which separates 
points on the (finite) fibres of $h$ over the next bigger stratum and matches the map from the previous step
on the union of the previous strata (the latter is a closed complex subvariety of $X$). A step of the second kind
amounts to removing the kernel of the differential $dh_x$ for all points in the next stratum, thereby ensuring 
that $df_x=dh_x\oplus dg_x$ is injective there. 
Analysis of the immersion condition shows that the graph of the map $\alpha=(a_{j,k}) \colon X\to \C^{q{\tilde q}}$ 
in \eqref{eq:ajk} over the given stratum must avoid a certain complex subvariety $\Sigma$ of $E=X\times \C^{q{\tilde q}}$ 
with algebraic fibres. Similarly, analysis of the point separation condition leads to the problem of finding a
map $\beta=(b_{j,k})\colon \C^n\to \C^{q{\tilde q}}$ avoiding a certain complex subvariety 
of $E=\C^n\times \C^{q{\tilde q}}$ with algebraic fibers.
In both cases the projection $\pi\colon E\setminus \Sigma\to X$ is a stratified holomorphic fibre bundle
all of whose fibres are Oka manifolds. More precisely, if $q\ge \left[\frac{n}{2}\right]+1$ then
each fibre $\Sigma_x= \Sigma\cap\, E_x$ is either empty or a union of finitely many affine linear subspaces of 
$E_x$ of complex codimension $>1$. The same lower bound on $q$ guarantees the existence of a continuous section 
$\alpha\colon X\to E\setminus \Sigma$  avoiding $\Sigma$. 
Gromov's  Oka principle \cite{Gromov1989} then furnishes a holomorphic section $X\to E\setminus \Sigma$.
A general Oka principle for sections of stratified holomorphic fiber bundles 
with Oka fibres is given by \cite[Theorem 5.4.4]{Forstneric2017E}.
We refer the reader to the original papers or to \cite[\S 9.3--9.4]{Forstneric2017E}
for further details.

The classical constructions of proper holomorphic embeddings 
of Stein manifolds into Euclidean spaces are coordinate dependent and hence 
do not generalize to more general target manifolds.
A conceptually new method has been found recently by 
Ritter and the author \cite{ForstnericRitter2014MZ}. It is based on a method of separating
certain pairs of compact polynomially convex sets in $\C^N$ by Fatou-Bieberbach domains
which contain one of the sets and avoid the other one.
Another recently developed method, which also depends on holomorphic automorphisms and applies
to a much bigger class of target manifolds, is discussed in \S \ref{sec:density}.

%
%
\subsection{Embedding open Riemann surfaces into $\C^2$}
\label{ss:RS}
The constructions described so far fail to embed open Riemann surfaces into $\C^2$. 
The problem is that the subvarieties $\Sigma$ in the proof of Theorem \ref{th:EGS} 
may contain hypersurfaces, and hence the Oka principle for sections of $E\setminus \Sigma\to X$
fails in general due to hyperbolicity of its complement. 
It is still an open problem whether every open Riemann surface embeds as a smooth
closed complex curve in $\C^2$. (By Theorem \ref{th:classical} it embeds 
properly holomorphically into $\C^3$ and immerses with normal crossings into $\C^2$. 
Every compact Riemann surface embeds
holomorphically into $\CP^3$ and immerses into $\CP^2$, but very few of them embed into $\CP^2$;
see \cite{GriffithsHarris1994}.)   There are no topological obstructions to this problem ---
it was shown by Alarc\'on and L{\'o}pez \cite{AlarconLopez2013} that every open orientable surface 
$S$ carries a complex structure $J$ such that the Riemann surface $X=(S,J)$ 
admits a proper holomorphic embedding into $\C^2$.

There is a variety of results in the literature concerning the existence of proper holomorphic embeddings
of certain special open Riemann surfaces into $\C^2$; the reader may wish to consult the survey 
in \cite[\S 9.10--9.11]{Forstneric2017E}. Here we mention only a few of the currently most
general known results on the subject. The first one from 2009, due to Wold and the author, concerns 
bordered Riemann surfaces.

%
%
%
%
\begin{theorem} \label{th:FW1}
{\rm \cite[Corollary 1.2]{ForstnericWold2009}}
Assume that $X$ is a compact bordered Riemann surface with boundary of class $\Cscr^r$ for some $r>1$. 
If $f \colon X \hra\C^2$ is a $\Cscr^1$ embedding that is holomorphic in the interior 
$\mathring X=X\setminus bX$, then $f$ can be approximated uniformly on compacts in 
$\mathring X$ by proper holomorphic embeddings $\mathring X\hra\C^2$.          
\end{theorem}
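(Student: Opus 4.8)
The plan is to obtain the proper embedding as a limit of a sequence of holomorphic embeddings $\mathring X \hra \C^2$, produced by first deforming $f$ near finitely many boundary points and then pushing the boundary of $X$ to infinity by holomorphic automorphisms of $\C^2$. Since $X$ is a compact bordered Riemann surface, it is the closure of $\mathring X$ in a slightly larger open Riemann surface $R$; after an arbitrarily small initial perturbation we may assume that $f$ extends to a holomorphic embedding of an open neighborhood of $X$ in $R$. Fix an exhaustion of $\mathring X$ by compact, $\Ocal(R)$-convex sets $L_1 \Subset L_2 \Subset \cdots$; all approximations will be measured uniformly on these sets, and the final map will agree with $f$ on $L_1$ to prescribed accuracy.

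First I would \emph{expose} one boundary point on each boundary curve. Writing the boundary components as $C_1, \dots, C_m$, I would choose points $p_j \in C_j$ and, for each, an embedded arc $\gamma_j$ issuing from $f(p_j)$, meeting $f(X)$ only at $f(p_j)$, and reaching out toward a prescribed position on a complex line $\Lambda_j$. An exposing lemma (in the spirit of Globevnik--Stens\o nes) then deforms $f$, by a deformation supported in small neighborhoods of the $p_j$ and arbitrarily small on $L_1$, to a holomorphic embedding $g$ of a neighborhood of $X$ whose image develops, near each $g(p_j)$, a controlled ``spike'' running along $\gamma_j$ and meeting $\Lambda_j$ transversely, with the rest of $g(X)$ kept strictly on one side. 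These spikes are the whole point of working in the minimal dimension: as recalled above, the codimension-one image is too rigid to be pushed freely toward infinity, and the exposed spikes are precisely the handles that let automorphisms of $\C^2$ drag each boundary curve out.

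I would then drag the boundary to infinity by an inductive Andersén--Lempert scheme. The aim is to construct holomorphic automorphisms $\phi_k$ of $\C^2$ such that $\phi_k$ is as close to the identity as desired on the current image of $L_k$, while pushing the portion of the boundary still lying in a bounded region farther outward along the exposed spikes. By the Andersén--Lempert theory of holomorphic automorphisms of $\C^2$, the required isotopies---moving the boundary outward while fixing the interior compact---can be approximated by genuine automorphisms, provided the relevant unions of interior pieces and spikes stay polynomially convex at each stage, which is what the exposed geometry secures. Setting $\Phi_k = \phi_k \circ \cdots \circ \phi_1$ and controlling the errors so that $\Phi_k \circ g$ converges uniformly on compacts of $g(\mathring X)$, the limit $F = \lim_{k\to\infty} \Phi_k \circ g$ is a holomorphic embedding $\mathring X \hra \C^2$, proper because every sequence in $\mathring X$ approaching $bX$ has image eventually leaving every ball.

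The hard part will be the two injectivity-and-convexity issues hidden in these steps. Proving the exposing lemma requires a delicate local construction that produces the prescribed transverse spike while keeping the deformation small and the embedding globally injective; and in the inductive scheme one must check at each stage that $\phi_k$ can simultaneously fix a growing interior compact and move the boundary outward \emph{without} creating self-intersections or destroying the polynomial convexity on which Andersén--Lempert depends. Maintaining both features in the codimension-one setting of $\C^2$, and ensuring that the telescoping compositions converge to an embedding rather than degenerating on $\mathring X$, is the heart of the argument.
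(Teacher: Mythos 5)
Your outline is missing the decisive step of the actual proof, and the step you substitute for it fails. After exposing one boundary point $p_j$ on each boundary curve, the proof of \cite[Corollary 1.2]{ForstnericWold2009} applies a \emph{rational shear} of the form $(z,w)\mapsto\bigl(z,\, w+\sum_j c_j/(z-a_j)\bigr)$, whose poles meet the image only at the exposed points; this sends those points to infinity and turns every boundary curve of the image into an \emph{unbounded} curve tending to infinity. Your scheme omits this and runs the Anders\'en--Lempert induction on a picture in which the boundary curves are still compact. That cannot work: if $C$ denotes the current compact boundary curves and $K$ the current image of $L_k$, then by the maximum principle (applied to polynomials restricted to the bordered complex curve) the polynomial hull $\widehat{C}$ contains the entire compact surface bounded by $C$, hence contains $K$ and the neck joining $K$ to $C$. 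Consequently the ``relevant unions of interior pieces and spikes'' are \emph{never} polynomially convex --- exposing, being only a small deformation of the embedding, does not change this --- so the Runge/polynomial-convexity hypothesis on which Anders\'en--Lempert approximation rests fails already at the first stage of your induction. The exposed points are not ``handles for dragging the boundary out''; their sole purpose is to make the shear holomorphic and injective on the rest of the surface. Only after the shear, when each boundary curve reaches to infinity, does Wold's key lemma apply: unbounded curves tending nicely to infinity can be swept out of any given ball by automorphisms nearly fixing a disjoint polynomially convex compact, essentially because such a curve can be slid along itself toward infinity while the relevant compact configurations stay polynomially convex. No such escape route exists for a compact boundary curve.

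A secondary gap is the properness argument. Locally uniform convergence of $\Phi_k\circ g$ on $\mathring X$ does not by itself imply that points approaching $bX$ have images leaving every ball: each stage maps $\overline X$ to a compact set, and the limit could a priori fail to be proper. The actual argument tracks the domain of convergence $\Omega$ of the compositions $\Phi_k$: one arranges that $\Omega$ is a Fatou--Bieberbach domain which contains the (sheared) image of $\mathring X$ but none of its boundary points, so that $\Phi=\lim_k\Phi_k\colon\Omega\to\C^2$ is a biholomorphism \emph{onto} $\C^2$. Then $g\colon\mathring X\to\Omega$ is proper as a map into $\Omega$ (its boundary values lie outside $\Omega$), and surjectivity of $\Phi$ is exactly what converts this into properness of $\Phi\circ g\colon\mathring X\hra\C^2$. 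Without the onto property of the limit map, preimages of balls need not be compact.
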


The proof relies on techniques introduced mainly by Wold \cite{Wold2006,Wold2006-2,Wold2007}.
One of them concerns exposing boundary points of an embedded bordered Riemann surface in $\C^2$.
This technique was improved in \cite{ForstnericWold2009}; see also the exposition in \cite[\S 9.9]{Forstneric2017E}.
The second one depends on methods of Anders\'en-Lempert theory concerning holomorphic automorphisms
of Euclidean spaces; see \S\ref{ss:wild}.
A proper holomorphic embedding $\mathring X \hra \C^2$ is obtained by
first exposing a boundary point in each of the boundary curves of $f(X)\subset \C^2$, sending these
points to infinity by a rational shear on $\C^2$ without other poles on $f(X)$, 
and then using a carefully constructed sequence of holomorphic automorphisms of $\C^2$
whose domain of convergence is a Fatou-Bieberbach domain $\Omega\subset \C^2$ 
which contains the embedded complex curve $f(X)\subset \C^2$, but does not contain any of its boundary points.
If $\phi\colon \Omega\to \C^2$ is a Fatou-Bieberbach map then $\phi\circ f \colon X\hra\C^2$
is a proper holomorphic embedding.
A complete exposition of this proof can also be found in \cite[\S 9.10]{Forstneric2017E}.

The second result due to Wold and the author \cite{ForstnericWold2013} (2013)
concerns domains with infinitely many boundary components.
A domain $X$ in the Riemann sphere $\mathbb P^1$ is a \emph{generalized circled domain} 
if every connected component of $\P^1 \setminus X$ is a round disc or a point.
Note that $\P^1 \setminus X$ contains at most countably many discs. 
By  the uniformization theorem of He and Schramm  \cite{HeSchramm1993,HeSchramm1995}, 
every domain in $\mathbb P^1$ with at most countably many complementary components
is conformally equivalent to a generalized circled domain.

%
%
%
%
\begin{theorem} \label{th:FW2}
{\rm \cite[Theorem 5.1]{ForstnericWold2013}}
Let $X$ be a generalized circled domain in $\P^1$. 
If all but finitely many punctures in $\mathbb P^1\setminus X$ are limit points of discs in $\mathbb P^1\setminus X$, 
then $X$ embeds properly holomorphically into $\mathbb C^2$.
\end{theorem}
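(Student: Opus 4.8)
The plan is to realize $X$ as a decreasing limit of finitely connected domains and to build a locally uniformly convergent sequence of proper holomorphic embeddings by the exposing--and--pushing--to--infinity technique behind Theorem~\ref{th:FW1}. Enumerate the round discs of $\P^1\setminus X$ as $D_1,D_2,\ldots$ and set $\Omega_n=\P^1\setminus\bigcup_{j\le n}\overline{D_j}$, so that $\Omega_1\supset\Omega_2\supset\cdots$ and $\bigcap_n\Omega_n=\P^1\setminus\bigcup_j\overline{D_j}=:\Omega_\infty\supseteq X$; the punctures of $\P^1\setminus X$ are precisely the points deleted from $\Omega_\infty$ to form $X$. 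I would first separate off the finitely many punctures $q_1,\ldots,q_m$ that are not limit points of discs, the remaining punctures being limit points of discs by hypothesis, and fix a compact exhaustion $K_1\subset K_2\subset\cdots$ of $X$.

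For the base case I would embed $\Omega_{n_0}$ (for a fixed $n_0\ge 1$) properly into $\C^2$ by applying Theorem~\ref{th:FW1} to the compact bordered surface $\overline{\Omega_{n_0}}$, and then expel the finitely many interior punctures $q_1,\ldots,q_m$ by the exposing-and-shear technique: after a generic automorphism one sends these finitely many points to infinity by a single rational shear of $\C^2$, obtaining a proper embedding $f_{n_0}\colon\Omega_{n_0}\setminus\{q_1,\ldots,q_m\}\hra\C^2$. The inductive step is the heart of the matter: given a proper embedding $f_n$ of $\Omega_n\setminus\{q_1,\ldots,q_m\}$, I would expose a boundary point of the new circle $\partial D_{n+1}$, push it toward infinity by a shear, and then, using Anders{\'e}n--Lempert automorphisms of $\C^2$ as in the proof of Theorem~\ref{th:FW1}, produce a proper embedding $f_{n+1}$ of $\Omega_{n+1}\setminus\{q_1,\ldots,q_m\}$ satisfying $\|f_{n+1}-f_n\|_{K_n}<2^{-n}$ together with $|f_{n+1}|>1+\sup_{K_n}|f_n|$ on $bK_{n+1}$. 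Crucially, I would couple the exhaustion to the construction, choosing $K_n$ so that the parts of $bK_{n+1}$ approaching any limit-point puncture run only alongside circles $\partial D_j$ that have already been pushed out.

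It then remains to pass to the limit $f=\lim_n f_n$, which exists and is holomorphic on $\Omega_\infty$ by summability of the errors, and to restrict it to $X$. Injectivity I would secure in the standard way, by taking the corrections small enough that the partial compositions converge on a Fatou--Bieberbach domain containing $f_{n_0}(X)$, so that the limit is an embedding; properness of the limit then follows from the same exhaustion mechanism recalled after Proposition~\ref{prop:generic}, since $|f_{n+1}|$ is large on $bK_{n+1}$. The content of the puncture hypothesis is that this largeness can in fact be arranged near every end of $X$: on a disc boundary it comes from the exposing step, near a limit-point puncture $p$ it comes for free because the portions of $bK_{n+1}$ near $p$ hug circles $\partial D_j$ already expelled (by the coupling above), and for the isolated punctures it was arranged once and for all in the base case. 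The step I expect to be the main obstacle is exactly this coupling near the limit-point punctures --- threading $bK_{n+1}$ between ever-smaller clustering discs while keeping $f_{n+1}$ large there and perturbing the interior by less than $2^{-n}$, so that properness genuinely survives the limit at $p$ rather than collapsing to a finite accumulation set --- together with the elementary but essential fact that a single rational shear can expel only finitely many isolated punctures, which is precisely what the hypothesis grants; the full bookkeeping is carried out in \cite{ForstnericWold2013}.
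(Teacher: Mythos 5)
Your toolkit and your reading of where the hypothesis enters are both essentially right: the proof of Theorem \ref{th:FW2} does run on the machinery of Theorem \ref{th:FW1} (exposing of boundary points, rational shears, Anders\'en--Lempert automorphisms), the finitely many punctures that are not limit points of discs are the only ones that must be expelled individually, and properness at the limit punctures must come from the treatment of the discs clustering at them. But your limit step is internally inconsistent, and the inconsistency sits exactly on the crux. Each $f_n$ is holomorphic in a full neighborhood of every limit puncture $p$ (since $p\in\Omega_n$ for all $n$), so if, as you assert, the sequence converged to a map holomorphic ``on $\Omega_\infty$'', then $f$ would be bounded near $p$ and $f|_X$ could not be proper. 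In any correct construction the sequence must therefore \emph{diverge} at every limit puncture while converging on every compact subset of $X$: the corrections must be made large, infinitely often, on the part of $X$ clustering at $p$, which is the opposite of the smallness your $2^{-n}$ bookkeeping provides, and it is not delivered by the norm-growth mechanism of \S\ref{ss:classical} either, since composing with automorphisms gives no Oka--Weil-type freedom to prescribe $|f_{n+1}|$ from below on $K_{n+1}\setminus K_n$. In the actual argument properness comes from the Fatou--Bieberbach mechanism, and there the limit punctures really are free: the complement of the domain of convergence is closed, so once it contains the images of all circles $\partial D_j$ it automatically contains their accumulation points. Your proposal invokes the Fatou--Bieberbach domain only for injectivity and leans on norm growth for properness, which is backwards.

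The second, larger gap is the induction scheme itself, which is not the one the paper describes and which breaks exactly where you flag trouble. You treat one disc per step, requiring each $f_n$ to be a proper embedding of $\Omega_n\setminus\{q_1,\ldots,q_m\}$. Already the inductive step is unjustified: the exposing technique of \cite{ForstnericWold2009} is developed for compact bordered Riemann surfaces, whereas at step $n+1$ your surface has $n+m$ ends at infinity; worse, properness at an end is a condition at infinity and is not preserved under approximation on compacts, so the new shear (whose pole line is unbounded) and the subsequent automorphisms can destroy properness at any of the previously treated ends, and re-establishing it at all of them, infinitely many times in succession, is precisely the unresolved difficulty. (A smaller issue of the same kind: expelling the interior punctures $q_i$ by a single shear requires the vertical lines through $f_{n_0}(q_i)$ to meet the unbounded image curve \emph{only} at those points, which is not a generic-position fact for proper, hence unbounded, images.) The proof in \cite{ForstnericWold2013} is organized differently, as the survey states: at every stage the infinitely many untreated boundary components are \emph{clustered} into finitely many suitable subsets, so that the object handled at each stage has only finitely many boundary pieces and the analytic methods behind Theorem \ref{th:FW1} apply to it, the clusters being refined as the induction proceeds. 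That clustering scheme is the entire content of the theorem beyond Theorem \ref{th:FW1}; your substitute for it, the ``coupled exhaustion'', is the step you yourself describe as the main obstacle and then defer to \cite{ForstnericWold2013}, so what you have is a plausible reading of the statement rather than a proof.
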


The paper \cite{ForstnericWold2013} contains several other more precise results on this subject.

The special case of Theorem \ref{th:FW2} for plane domains $X\subset \C$ bounded by finitely many 
Jordan curves (and without punctures) is due to Globevnik and Stens{\o}nes \cite{GlobevnikStensones1995}. Results 
on embedding certain Riemann surfaces with countably many boundary components into $\C^2$ were also proved
by Majcen \cite{Majcen2009}; an exposition can be found in \cite[\S 9.11]{Forstneric2017E}.
The proof of Theorem \ref{th:FW2} relies on similar techniques as that of  Theorem \ref{th:FW1}, 
but it uses a considerably more involved induction scheme for dealing  with infinitely many boundary components, 
clustering them together into suitable subsets to which the available analytic methods can be applied.  
The same technique gives the analogous result for domains in tori.

There are a few other recent results concerning embeddings of open Riemann surfaces
into $\C\times \C^*$ and $(\C^*)^2$, where $\C^*=\C\setminus \{0\}$.
Ritter showed in \cite{Ritter2013JGEA} that, for every circular domain $X\subset \D$ 
with finitely many boundary components, 
each homotopy class of continuous maps $X\to \C\times \C^*$ 
contains a proper holomorphic map. If $\D\setminus X$ contains
finitely many punctures, then every continuous map $X\to \C\times \C^*$ is homotopic
to a proper holomorphic immersion that identifies at most finitely many pairs of points in $X$
(L{\'arusson and Ritter  \cite{LarussonRitter2014}). Ritter \cite{Ritter2014} also gave an
analogue of Theorem} \ref{th:FW1} for proper holomorphic embeddings 
of certain open Riemann surfaces into $(\C^*)^2$.

%
%
\subsection{Automorphisms of Euclidean spaces and wild embeddings}
\label{ss:wild}
There is another line of investigations  that we wish to touch upon. It concerns the question
how many proper holomorphic embeddings $X\hra \C^N$ of a given Stein manifold $X$ 
are there up to automorphisms of $\C^N$, and possibly also of $X$.
This question was motivated  by certain famous results from algebraic geometry, 
such as the one of Abhyankar and Moh \cite{AbhyankarMoh1975} and Suzuki \cite{Suzuki1974}
to the effect that every polynomial embedding $\C\hra\C^2$ is equivalent to the
linear embedding $z\mapsto (z,0)$ by a polynomial automorphism of $\C^2$.

It is a basic fact that for any $N>1$ the holomorphic automorphism group $\Aut(\C^N)$ is very big 
and complicated. This is in stark contrast to the situation for bounded 
or, more generally, hyperbolic domains in $\C^N$ which have few automorphism;
see Greene et al. \cite{Greene2011} for a survey of the latter topic. 
 Major early work on understanding the group $\Aut(\C^N)$ was made
by Rosay and Rudin \cite{RosayRudin1988}. This theory became very 
useful with the papers of Anders{\'e}n and Lempert \cite{AndersenLempert1992} 
and Rosay and the author \cite{ForstnericRosay1993} in 1992--93. The central result
is that every map in a smooth isotopy of biholomorphic mappings
$\Phi_t\colon \Omega=\Omega_0 \to \Omega_t$ $(t\in [0,1])$ 
between Runge domains in $\C^N$, with $\Phi_0$ the identity on $\Omega$,
can be approximated uniformly on compacts in $\Omega$ by holomorphic automorphisms of $\C^N$
(see \cite[Theorem 1.1]{ForstnericRosay1993} or \cite[Theorem 4.9.2]{Forstneric2017E}).
The analogous result holds on any Stein manifold with the density property;
see \S\ref{sec:density}. A comprehensive survey of this subject can be found in  
\cite[Chap.\ 4]{Forstneric2017E}.

By twisting a given submanifold of $\C^N$ with a sequence of holomorphic automorphisms, 
one can show that for any pair of integers $1\le n<N$
the set of all equivalence classes of proper holomorphic embeddings 
$\C^n\hra \C^N$, modulo automorphisms of both spaces, 
is uncountable (see \cite{DerksenKutzschebauchWinkelmann1999}). 
In particular,  the Abhyankar-Moh theorem fails 
in the holomorphic category since there exist proper holomorphic
embeddings $\phi \colon \C\hra\C^2$ that are nonstraightenable by automorphisms 
of $\C^2$ \cite{ForstnericGlobevnikRosay1996}, as well as embeddings  whose
complement $\C^2\setminus \phi(\C)$ is Kobayashi hyperbolic \cite{BuzzardFornaess1996}.
More generally, for any pair of integers $1\le n<N$ there exists a proper holomorphic embedding 
$\phi\colon \C^n\hra \C^N$ such that every nondegenerate holomorphic map 
$\C^{N-n}\to \C^N$ intersects $\phi(\C^n)$ at infinitely many points \cite{Forstneric1999JGA}.
It is also possible to arrange that $\C^N\setminus \phi(\C^n)$ 
is Eisenman $(N-n)$-hyperbolic \cite{BorellKutzschebauch2006}. 
A more comprehensive discussion of this subject can be found in 
\cite[\S 4.18]{Forstneric2017E}.

By using nonlinearizable proper holomorphic embeddings $\C\hra\C^2$,
Derksen and Kutzschauch gave the first known examples of nonlinearizable
periodic automorphisms of $\C^n$ \cite{DerksenKutzschebauch1998}.
For instance, there is a nonlinearizable holomorphic involution on $\C^4$. 

In another direction, Baader et al.\ \cite{Baaderall2010} constructed an example of a properly 
embedded disc in $\C^2$ whose image is topologically knotted, thereby answering
a questions of Kirby. It is unknown whether there exists a knotted proper
holomorphic embedding $\C\hra\C^2$, or an unknotted proper
holomorphic embedding $\D\hra \C^2$ of the disc.

Automorphisms of $\C^2$ and $\C^*\times \C$ were used in a very clever way by Wold
in his landmark construction of non-Runge Fatou-Bieberbach domains in $\C^2$ 
\cite{Wold2008} and of non-Stein long $\C^2$'s \cite{Wold2010}. Each of these results 
solved a long-standing open problem.
More recently, Wold's construction was developed further 
by Boc Thaler and the author \cite{BocThalerForstneric2016} who showed 
that there is a continuum of pairwise nonequivalent long $\C^n$'s for any $n>1$ which do 
not admit any nonconstant holomorphic or plurisubharmonic functions.
(See aso \cite[4.21]{Forstneric2017E}.)


\section{Embeddings into Stein manifolds with the density property}
\label{sec:density}

\subsection{Universal Stein manifolds}\label{ss:universal}
It is natural to ask which Stein manifolds, besides the Euclidean spaces, 
contain all Stein manifolds of suitably low dimension as closed complex submanifolds.
To facilitate the discussion, we introduce the following notions.

\begin{definition}\label{def:universal}
Let $Y$ be a Stein manifold.
\begin{enumerate} 
\item 
$Y$ is {\em universal for proper holomorphic embeddings} if every Stein manifold $X$ with $2\dim X<\dim Y$ 
admits a proper holomorphic embedding $X\hra Y$. 
\vspace{1mm}
\item
$Y$ is {\em strongly universal for proper holomorphic embeddings} if, under the assumptions in (1), 
every continuous map $f_0\colon X\to Y$ which is holomorphic in a neighborhood of a compact 
$\Ocal(X)$-convex set $K\subset X$ is homotopic to a proper holomorphic embedding $f_0\colon X\hra Y$ 
by a homotopy $f_t\colon X\to Y$ $(t\in[0,1])$ such that $f_t$ is holomorphic and arbitrarily close to $f_0$ 
on $K$ for every $t\in [0,1]$.
\vspace{1mm}
\item 
$Y$ is (strongly) {\em universal for proper holomorphic immersions} if  condition (1) (resp.\ (2))
holds for proper holomorphic immersions $X\to Y$ from any Stein manifold $X$ satisfying $2\dim X\le \dim Y$. 
\end{enumerate}
\end{definition}

In the terminology of Oka theory (cf.\ \cite[Chap.\ 5]{Forstneric2017E}), 
a complex manifold $Y$ is (strongly) universal for proper holomorphic
embeddings if it satisfies the basic Oka property (with approximation) for proper holomorphic
embeddings $X\to Y$ from Stein manifolds of dimension $2\dim X<\dim Y$. 
The dimension hypotheses in the above definition are justified by Proposition \ref{prop:generic}.
The main goal is to find good sufficient conditions for a Stein manifold to be universal.
If a manifold $Y$ is Brody hyperbolic \cite{Brody1978} (i.e., it does not admit any nonconstant holomorphic images of $\C$)
then clearly no complex manifold containing a nontrivial holomorphic image of $\C$ can be embedded into $Y$. 
In order to get positive results, one must therefore assume that $Y$ enjoys a suitable
holomorphic flexibility (anti-hyperbolicity) property.

\begin{problem}\label{prob:Oka}
Is every Stein Oka manifold (strongly) universal for proper holomorphic embeddings and immersions?
\end{problem}

Recall (see e.g.\ \cite[Theorem 5.5.1]{Forstneric2017E}) that every Oka manifold is strongly universal 
for not necessarily proper holomorphic maps, embeddings and immersions. Indeed, the cited theorem asserts 
that a generic holomorphic map $X\to Y$ from a 
Stein manifold $X$ into an Oka manifold $Y$ is an immersion if $\dim Y\ge 2\dim X$, and is an injective
immersion if $\dim Y > 2\dim X$. However, the Oka condition does not imply universality for {\em proper}
holomorphic maps since there are examples of (compact or noncompact) Oka manifolds 
without any closed complex subvarieties of positive dimension (see \cite[Example 9.8.3]{Forstneric2017E}).  

\subsection{Manifolds with the (volume) density property}
The following condition was introduced in 2000 by Varolin \cite{Varolin2000,Varolin2001}.

\begin{definition}
\label{def:density}
A complex manifold $Y$ enjoys the (holomorphic) {\em density property}
if the Lie algebra generated by the $\C$-complete holomorphic vector fields on $Y$
is dense in the Lie algebra of all holomorphic vector fields in the compact-open topology.

A complex manifold $Y$ endowed with a holomorphic volume form $\omega$ enjoys the {\em volume density property} 
if the analogous density condition holds in the Lie algebra of all holomorphic vector fields
on $Y$ with vanishing $\omega$-divergence.
\end{definition}

The algebraic density and volume density properties were introduced by Kaliman and Kutzsche\-bauch
\cite{KalimanKutzschebauch2010IM}.  The class of Stein manifolds with the (volume) density property
includes most complex Lie groups and homogeneous spaces, as well as many nonhomogeneous manifolds. 
We refer to \cite[\S 4.10]{Forstneric2017E} for a more complete 
discussion and an up-to-date collection of references on this subject. 
Another recent survey is the paper by Kaliman and Kutzschebauch \cite{KalimanKutzschebauch2015}.
Every complex manifold with the density property is an Oka manifold, and
a Stein manifold with the density property is elliptic in the sense of Gromov
(see \cite[Proposition 5.6.23]{Forstneric2017E}).
It is an open problem whether a contractible Stein manifold with the 
density property is biholomorphic to a complex Euclidean space.

The following result is due to Andrist and Wold \cite{AndristWold2014} in the special case
when $X$ is an open Riemann surface, to Andrist et al. \cite[Theorems 1.1, 1.2]{AndristFRW2016}
for embeddings, and to the author \cite[Theorem 1.1]{Forstneric-immersions} for immersions
in the double dimension. 

%
%
\begin{theorem} \label{th:density}
{\rm \cite{AndristFRW2016,AndristWold2014,Forstneric-immersions}}
Every Stein manifold with the density or the volume density property is strongly universal
for proper holomorphic embeddings and immersions.
\end{theorem}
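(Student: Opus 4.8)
The plan is to separate the two tasks hidden in Definition \ref{def:universal}(2): producing a \emph{holomorphic} embedding (resp.\ immersion) in the prescribed homotopy class, which is a soft Oka-theoretic matter, and upgrading it to a \emph{proper} one, which is where the density property does the work. Since a Stein manifold with the (volume) density property is an Oka manifold, the cited fact that every Oka manifold is strongly universal for not-necessarily-proper holomorphic embeddings and immersions provides, from the given continuous $f_0\colon X\to Y$ (holomorphic near the $\Ocal(X)$-convex set $K$), a homotopy $f_0\simeq g$ to a holomorphic map $g\colon X\to Y$ that is an embedding when $2\dim X<\dim Y$ (an immersion when $2\dim X\le\dim Y$), with $g$ and all intermediate maps close to $f_0$ on $K$. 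The entire remaining problem is therefore to deform $g$ to a \emph{proper} embedding (immersion) through holomorphic maps that stay close to $f_0$ on $K$.

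First I would fix a smooth strictly plurisubharmonic exhaustion $\phi\colon Y\to\R$ and a strictly plurisubharmonic Morse exhaustion $\rho\colon X\to\R$ with $K\subset\{\rho<0\}$, and exhaust $X$ by the $\Ocal(X)$-convex sublevel sets $X_t=\{\rho\le t\}$. The construction is an induction that raises the level $t$ and produces holomorphic maps $g_j\colon X\to Y$, embeddings (immersions) on $X_{t_j}$, so that $g_{j+1}$ approximates $g_j$ on $X_{t_j}$ while the new shell is pushed into a high sublevel of $\phi$: concretely, $\max_{X_{t_j}}\phi\circ g_j<\min_{\{\rho=t_{j+1}\}}\phi\circ g_{j+1}$ with these minima tending to $+\infty$. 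If the approximations are fast enough the limit $g=\lim_j g_j$ is holomorphic, restricts to an embedding (immersion) on every sublevel, and is proper because $\phi\circ g\to+\infty$ along the ends of $X$; injectivity on all of $X$ is secured by arranging successive shells to occupy disjoint $\phi$-sublevels.

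Each inductive step splits, as usual in Stein geometry, into a noncritical and a critical case according to whether $[t_j,t_{j+1}]$ contains a critical value of $\rho$. In the noncritical case $X_{t_{j+1}}$ is a strongly pseudoconvex enlargement of $X_{t_j}$ with no change of topology, and I would extend the embedding across the pseudoconvex shell by the method behind Theorem \ref{th:BDF2010} (Drinovec Drnov\v sek and the author), which already embeds strongly pseudoconvex shells properly into an arbitrary Stein target; the genericity in Proposition \ref{prop:generic} keeps the extension an embedding (immersion), and openness of these conditions makes them stable under small perturbations. In the critical case one passes a critical point of $\rho$ of index $\le n=\dim X$, attaching a handle of dimension $\le n$; here the embedding is extended over the core of the handle by the convex-approximation and handle-attachment techniques of Forstneri\v c and Slapar combined with the shell extension, after which the step is completed as in the noncritical case.

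The heart of the argument---and the only place the density property is essential---is the outward pushing that makes $g$ proper. After extending over a shell, the image of the new part must be moved into $\{\phi>b_j\}$, with $b_j\to+\infty$, while the image of $X_{t_j}$ is left essentially fixed and the map stays an embedding. I would realize this by constructing, on a neighborhood of $g_j(X_{t_{j+1}})$ in $Y$, an isotopy of biholomorphisms that nearly fixes $g_j(X_{t_j})$ and carries the shell out along a gradient-like direction of $\phi$, and then approximating this isotopy uniformly on the relevant compact set by a global holomorphic automorphism of $Y$---this is exactly the Anders\'en--Lempert theorem, valid here because $Y$ has the density property. Composing $g_j$ with this automorphism yields $g_{j+1}$; since such automorphisms are isotopic to the identity, the whole deformation remains in the homotopy class of $f_0$ and, being close to the identity near $g_j(K)$, stays close to $f_0$ on $K$. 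The main obstacle is controlling this pushing simultaneously over infinitely many steps so that the errors on $X_{t_j}$ stay summable, injectivity is preserved in the limit, and the shells genuinely escape to infinity; the volume density case is handled identically, replacing the automorphisms by $\omega$-preserving ones generated by the divergence-free complete fields and checking that the pushing construction respects $\omega$.
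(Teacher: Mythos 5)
The central step of your construction---pushing the image of each shell $g_j(\overline{X_{t_{j+1}}}\setminus X_{t_j})$ into high sublevels of $\phi$ by a global automorphism of $Y$ while nearly fixing $g_j(\overline{X_{t_j}})$---has a genuine gap. The Anders\'en--Lempert theorem for manifolds with the density property (see \cite[Theorem 1.1]{ForstnericRosay1993} and \cite[Theorem 4.10.5]{Forstneric2017E}) approximates an isotopy of biholomorphic maps by automorphisms only when the initial domain and all intermediate images are \emph{Runge} in $Y$. In your scheme the initial domain must be a neighborhood of the compact set $C_j=g_j(\overline{X_{t_{j+1}}})$, where $g_j$ descends from the non-proper embedding produced by the soft Oka principle in your first step. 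But the image of a sublevel set under a non-proper holomorphic embedding need not be $\Ocal(Y)$-convex: its holomorphic hull can be strictly larger than the image (the image of the boundary $(2n{-}1)$-manifold may bound additional analytic structure), and the soft Oka principle gives no control whatsoever over this hull. Hence $C_j$ need not admit any Runge neighborhood, and the approximation theorem cannot be invoked. Even where it can, the isotopy you need---fixing one compact piece while carrying another far away---must keep the union of the fixed and the moving part Runge at every time, a nontrivial joint convexity condition that must moreover be maintained through infinitely many steps; this is exactly the difficulty that forces the delicate polynomial-convexity and point-exposing arguments behind Theorems \ref{th:FW1}--\ref{th:FW2}. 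You identify summability of errors and escape to infinity as the main obstacle, but the real obstruction is this Runge/convexity hypothesis, and it cannot be arranged a posteriori for a map obtained from soft Oka theory.

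The paper's proof is organized precisely so that this problem never arises: one does not first build a global non-proper embedding and then correct it. Instead one runs the inductive scheme of the Oka principle on $X$ (convex bumps and handlebodies), carrying the properness control along from the start: at each step the embedding $f\colon\overline D\hra Y$ satisfies $f(bD)\subset Y\setminus L$ for a compact $\Ocal(Y)$-convex set $L$, and the density property is used \emph{locally}, to produce a map $g\colon \overline B\to Y\setminus L$ on the small convex bump $B$ approximating $f$ on the attaching set $\overline{B\cap D}$; the relevant isotopies then live on small convex-like sets whose Runge behavior can be controlled, $f$ and $g$ are glued by the spray (Cartan pair) method on the source rather than by composing with automorphisms on the target, and general position (Proposition \ref{prop:generic}) restores embeddedness, while properness of the limit follows because the avoided sets $L$ exhaust $Y$. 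Two smaller points: your appeals to the method of Theorem \ref{th:BDF2010} and to Forstneri\v c--Slapar handle techniques sit awkwardly in your scheme (the map is already globally defined after your first step, and the Forstneri\v c--Slapar technique changes the Stein structure on $X$, which is permitted in Theorem \ref{th:soft} but not here; the paper instead reduces the handle case to the bump case by Mergelyan approximation); and in the immersion case at double dimension the paper needs an extra idea you do not address, namely very thin bumps ensuring injectivity of the map on each attaching set.
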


To prove Theorem \ref{th:density}, one follows the scheme of proof of the Oka principle for maps
from Stein manifolds to Oka manifolds (see \cite[Chapter 5]{Forstneric2017E}), but with a crucial
addition which we now briefly describe. 

Assume that $D\Subset X$ is a relatively compact strongly pseudoconvex 
domain with smooth boundary and $f\colon \overline D\hra Y$ is a holomorphic embedding
such that $f(bD) \subset Y\setminus L$, where $L$ is a given compact $\Ocal(Y)$-convex set 
in $Y$. We wish to approximate $f$ uniformly on $\overline D$ by a holomorphic embedding 
$f'\colon \overline{D'}\hra Y$ of a certain bigger strongly pseudoconvex domain $\overline{D'} \Subset X$ to $Y$, 
where $D'$ is either a union of $D$ with a small convex bump $B$
chosen such that $f(\overline {D\cap B})\subset Y\setminus L$, or a thin handlebody
whose core is the union of $D$ and a suitable smoothly embedded totally real disc in $X\setminus D$.
(The second case amounts to a change of topology of the domain, and it typically occurs
when passing a critical point of a strongly plurisubharmonic exhaustion function on $X$.)
In view of Proposition \ref{prop:generic}, we only need to approximate $f$ by a holomorphic map
$f'\colon \overline{D'} \to Y$ since a small generic perturbation of $f'$ then yields an embedding. 
It turns out that the second case involving a handlebody 
easily reduces to the first one by applying a Mergelyan type approximation theorem; 
see \cite[\S 5.11]{Forstneric2017E} for this reduction. 
The attachment of a bump is handled by using the density property 
of $Y$. This property allows us to find a holomorphic map $g\colon \overline B\to Y\setminus L$ 
approximating $f$ as closely as desired on a neighborhood of the attaching set $\overline{B\cap D}$
and satisfying $g(\overline B)\subset Y\setminus L$.
(More precisely, we use that isotopies of biholomorphic maps between
pseudoconvex Runge domains in $Y$ can be approximated by holomorphic automorphisms
of $Y$; see  \cite[Theorem 1.1]{ForstnericRosay1993} 
and also \cite[Theorem 4.10.5]{Forstneric2017E} for the version pertaining  to Stein manifolds with the
density property.) Assuming that $g$ is sufficiently close to $f$ on $\overline{B\cap D}$, 
we can glue them into a holomorphic map $f'\colon \overline{D'}\to Y$ which approximates $f$ on $\overline D$ 
and satisfies $f'(\overline{B})\subset Y\setminus L$.
The proof is completed by an induction procedure in which every induction 
step is of the type described above. The inclusion $f'(\overline{B})\subset Y\setminus L$
satisfied by the next map in the induction step guarantees properness of the limit embedding $X\hra Y$.
Of course the sets $L\subset Y$ also increase and form an exhaustion of $Y$.

The case of immersions in double dimension requires a more precise analysis. 
In the induction step described above, we must ensure that the immersion 
$f \colon \overline D\to Y$ is injective (an embedding) on the attaching set $\overline{B\cap D}$
of the bump $B$. This can be arranged by general position provided that $\overline{B\cap D}$
is very thin. It is shown in \cite{Forstneric-immersions} that it suffices to work with convex
bumps such that, in suitably chosen holomorphic coordinates on a neighborhood of $\overline B$,
the set $B$ is a convex polyhedron and $\overline{B\cap D}$ is a very thin neighborhood 
of one of its faces. This means that $\overline{B\cap D}$ is small thickening of a
$(2n-1)$-dimensional object in $X$, and hence we can easily arrange that $f$ 
is injective  on it. The remainder of the proof proceeds exactly as before,
completing our sketch of proof of Theorem \ref{th:density}.

%
%
\subsection{On the Schoen-Yau conjecture}\label{ss:S-Y}
The following corollary to Theorem \ref{th:density} is related to a conjecture of  Schoen and Yau \cite{SchoenYau1997}
that the disc $\D=\{\zeta \in\C:|\zeta|<1\}$ does not admit any proper harmonic maps to $\R^2$.

\begin{corollary}\label{cor:harmonic}
Every Stein manifold $X$ of complex dimension $n$ admits a proper holomorphic immersion to $(\C^*)^{2n}$,
and a proper pluriharmonic map into $\R^{2n}$.
\end{corollary}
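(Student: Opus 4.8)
The plan is to derive Corollary \ref{cor:harmonic} directly from Theorem \ref{th:density} by exploiting the fact that $(\C^*)^{2n}$ is a Stein manifold enjoying the volume density property. Since $(\C^*)^m$ with its standard holomorphic volume form $\omega = \frac{dz_1}{z_1}\wedge\cdots\wedge\frac{dz_m}{z_m}$ has the volume density property (this is one of Varolin's original examples, and is noted in the introduction of the present survey), Theorem \ref{th:density} applies with target $Y=(\C^*)^{2n}$. The dimensional hypothesis for immersions requires $2\dim X \le \dim Y$, i.e.\ $2n \le 2n$, which holds with equality; this is precisely why we target the double dimension $(\C^*)^{2n}$ rather than something smaller. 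Thus for any Stein manifold $X$ of dimension $n$, taking any continuous map $X\to (\C^*)^{2n}$ (for instance a constant map to a base point, which is trivially holomorphic near any $\Ocal(X)$-convex set), strong universality for immersions furnishes a proper holomorphic immersion $F=(f_1,\ldots,f_{2n})\colon X\to (\C^*)^{2n}$. This establishes the first assertion.

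For the second assertion, the idea is to compose the proper holomorphic immersion $F$ with a proper \emph{real} map $(\C^*)^{2n}\to \R^{2n}$ built from the logarithm of the modulus. Concretely, I would consider the map
\begin{equation*}
	H = (\log|f_1|,\ldots,\log|f_{2n}|)\colon X \to \R^{2n}.
\end{equation*}
Each component $\log|f_j| = \mathrm{Re}(\log f_j)$ is locally the real part of a holomorphic function (the local branches of $\log f_j$ exist since $f_j$ is nowhere vanishing, being a map into $\C^*$), hence each $\log|f_j|$ is a genuine pluriharmonic function on all of $X$ even though $\log f_j$ need not be globally defined. Therefore $H$ is a pluriharmonic map into $\R^{2n}$, and it remains only to check that $H$ is proper.

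The main obstacle, and the step requiring genuine care, is the properness of $H$. Properness of $F$ means that $|F(x)|\to\infty$ as $x$ leaves every compact subset of $X$; but $|F|$ is controlled by the Euclidean norm in $(\C^*)^{2n}\subset \C^{2n}$, whereas $H$ involves $\log|f_j|$, which tends to $-\infty$ when some coordinate $f_j(x)$ approaches $0$. Since the target is $\C^*$, approaching $0$ corresponds to escaping to the puncture, i.e.\ to a genuine end of $(\C^*)^{2n}$, so $F$ being proper into $(\C^*)^{2n}$ already rules out $f_j(x)\to 0$ along a divergent sequence staying in a bounded region; a divergent sequence in $X$ must push $F(x)$ toward the boundary of $(\C^*)^{2n}$ in $\C^{2n}$, which is $\{z : |z|=\infty\}\cup\{z : \text{some } z_j = 0\}$. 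In the first case some $\log|f_j|\to+\infty$, and in the second case some $\log|f_j|\to-\infty$; in either case $|H(x)|=\bigl(\sum_j (\log|f_j(x)|)^2\bigr)^{1/2}\to\infty$. Making this dichotomy precise is exactly where one must use that $F$ is proper as a map into the punctured target $(\C^*)^{2n}$ and not merely into $\C^{2n}$. Once properness of $H$ is verified, the corollary follows, and one remarks that specializing to $n=1$ and restricting a proper pluriharmonic map $\D\to\R^2$ (noting the disc is Stein) contradicts the Schoen--Yau conjecture, as indicated in the statement preceding the corollary.
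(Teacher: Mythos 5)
Your proposal is correct and takes essentially the same route as the paper: invoke the volume density property of $(\C^*)^{2n}$, apply Theorem \ref{th:density} (the immersion case in the double dimension) to get a proper holomorphic immersion $f=(f_1,\ldots,f_{2n})\colon X\to(\C^*)^{2n}$, and pass to the pluriharmonic map $(\log|f_1|,\ldots,\log|f_{2n}|)$. The only difference is that you spell out the properness verification --- that divergence in $(\C^*)^{2n}$ forces $\max_j\bigl|\log|f_j|\bigr|\to\infty$ --- a step the paper asserts without detail.
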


\begin{proof}
The space $(\C^*)^n$ with coordinates $z=(z_1,\ldots,z_n)$ (where $z_j\in \C^*$ for $j=1,\ldots,n$)
enjoys the volume density property with respect to the volume form 
\[
	\omega= \frac{dz_1\wedge\cdots\wedge dz_n}{z_1\cdots z_n}.
\]
(See Varolin \cite{Varolin2001} or \cite[Theorem 4.10.9(c)]{Forstneric2017E}.) 
Hence, \cite[Theorem 1.2]{Forstneric-immersions} (the part of
Theorem \ref{th:density} above concerning immersions into the double dimension) furnishes
a proper holomorphic immersion $f=(f_1,\ldots,f_{2n})\colon X\to (\C^*)^{2n}$. 
It follows that the map 
\begin{equation}\label{eq:log}
	u=(u_1,\ldots,u_{2n})\colon X\to \R^{2n}\quad \text{with}\ \  u_j=\log|f_j|\ \  \text{for}\ \ j=1,\ldots, 2n
\end{equation}
is a proper map of $X$ to $\R^{2n}$ whose components are pluriharmonic functions.
\end{proof}

Corollary \ref{cor:harmonic} gives a counterexample to the Schoen-Yau conjecture in 
every dimension and for any  Stein  source manifold. The first and very explicit counterexample 
was given by Bo{\v z}in \cite{Bozin1999IMRN} in 1999.
In 2001, Globevnik and the author \cite{ForstnericGlobevnik2001MRL} constructed 
a proper holomorphic map $f=(f_1,f_2)\colon\D\to\C^2$ whose image is contained in $(\C^*)^2$, i.e.,
it avoids both coordinate axes.
The associated harmonic map $u=(u_1,u_2)\colon \D\to\R^2$ \eqref{eq:log} then satisfies 
$\lim_{|\zeta|\to 1} \max\{u_1(\zeta),u_2(\zeta)\} = +\infty$  which implies properness. 
Next, Alarc{\'o}n and L{\'o}pez \cite{AlarconLopez2012JDG} showed in 2012 
that every open Riemann surface $X$ admits a conformal minimal immersion 
$u=(u_1,u_2,u_3)\colon X\to\R^3$ with a proper (harmonic) projection 
$(u_1,u_2)\colon X\to \R^2$. In 2014, Andrist and Wold \cite[Theorem 5.6]{AndristWold2014}
proved Corollary \ref{cor:harmonic} in the case $n=1$.

Comparing Corollary \ref{cor:harmonic} with the above mentioned result of Globevnik and the 
author \cite{ForstnericGlobevnik2001MRL}, one is led to the following question.

\begin{problem}
Let $X$ be a Stein manifold of dimension $n>1$. Does there exist a proper holomorphic immersion
$f\colon X\to \C^{2n}$ such that $f(X)\subset (\C^*)^{2n}$?
\end{problem}

More generally, one can ask which type of sets in Stein manifolds can be avoided by 
proper holomorphic maps from Stein manifolds of sufficiently low dimension. In this direction, Drinovec Drnov\v sek
showed in \cite{Drinovec2004MRL} that any closed complete pluripolar set can be avoided by 
proper holomorphic discs; see also Borell et al.\ \cite{Borell2008MRL} for embedded discs 
in $\C^n$. Note that every closed complex subvariety is a complete pluripolar set.


\section{Embeddings of strongly pseudoconvex Stein domains}
\label{sec:PSC}

\subsection{The Oka principle for embeddings of strongly pseudoconvex domains}
What can be said about proper holomorphic
embeddings and immersions of Stein manifolds $X$ into arbitrary (Stein) manifolds $Y$?
If $Y$ is Brody hyperbolic \cite{Brody1978}, 
then no complex manifold containing a nontrivial holomorphic image of $\C$ 
embeds into $Y$. However, if $\dim Y>1$ and $Y$ is Stein then $Y$ still admits proper holomorphic images of 
any bordered Riemann surface \cite{DrinovecForstneric2007DMJ,Globevnik2000}.
For domains in Euclidean spaces, this line of investigation was 
started in 1976 by Forn{\ae}ss \cite{Fornaess1976} and continued in 1985 by 
L{\o}w \cite{Low1985MZ} and the author \cite{Forstneric1986TAMS} who proved
that every bounded strongly pseudoconvex domain $X\subset \C^n$ admits
a proper holomorphic embedding into a high dimensional polydisc and ball. 
The long line of subsequent developments culminated in the following 
result of Drinovec Drnov\v sek and the author \cite{DrinovecForstneric2007DMJ,DrinovecForstneric2010AJM}.

%
%
%
%
\begin{theorem}
\label{th:BDF2010}
{\rm \cite[Corollary 1.2]{DrinovecForstneric2010AJM}}
Let $X$ be a relatively compact, smoothly bounded, strongly pseudo\-convex domain in 
a Stein manifold $\wt X$ of dimension $n$, and let $Y$ be a Stein manifold of dimension $N$. 
If $N>2n$ then every continuous map $f\colon \overline X \to Y$ which is holomorphic on $X$
can be approximated uniformly on compacts in $X$ by proper holomorphic embeddings $X\hra Y$.
If $N\ge 2n$ then the analogous result holds for immersions.
The same conclusions hold if the manifold $Y$ is strongly $q$-complete for some $q\in \{1,2,\ldots, N-2n+1\}$,
where the case $q=1$ corresponds to Stein manifolds.
\end{theorem}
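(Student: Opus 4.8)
The plan is to prove Theorem~\ref{th:BDF2010} by an exhaustion-and-gluing argument that attaches convex bumps and handles to a growing family of strongly pseudoconvex subdomains, pushing the boundary image out toward infinity in $Y$ at every step. First I would fix a strongly plurisubharmonic exhaustion function $\rho\colon \overline X\to\R$ (available since $X$ is strongly pseudoconvex) whose sublevel sets $X_c=\{\rho<c\}$ are strongly pseudoconvex and exhaust $X$, and I would fix on the target side a normal exhaustion. When $Y$ is Stein this is an exhaustion by compact $\Ocal(Y)$-convex sets $L_1\subset L_2\subset\cdots$; in the strongly $q$-complete case it is the exhaustion by sublevel sets of the given $q$-convex function, and the role of $\Ocal(Y)$-convexity is replaced by the Andreotti--Grauert approximation theory on $q$-complete manifolds, which is precisely where the hypothesis $q\le N-2n+1$ will be used. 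The goal of one inductive step is: given a holomorphic map $f\colon \overline{X_c}\to Y$ with $f(bX_c)\subset Y\setminus L_j$, approximate it uniformly on $\overline{X_c}$ by a holomorphic map $f'\colon \overline{X_{c'}}\to Y$ defined on a slightly larger domain with $f'(bX_{c'})\subset Y\setminus L_{j+1}$, and with $f'$ an embedding (resp.\ immersion) on $\overline{X_c}$.

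The key analytic device is a gluing lemma for the noncritical case. When $c'>c$ and no critical value of $\rho$ lies in $[c,c']$, the enlargement $\overline{X_c}\rightsquigarrow\overline{X_{c'}}$ can be achieved by finitely many attachments of small \emph{convex bumps} $B$, where $\overline{B\cap X_c}$ is a small strongly pseudoconvex piece on which I may assume, after a holomorphic coordinate change near $\overline B$, that everything is convex. On each bump I would first produce a holomorphic map $g\colon \overline B\to Y$ that approximates $f$ well on the overlap $\overline{B\cap X_c}$ \emph{and} satisfies $g(\overline B)\subset Y\setminus L_{j+1}$; this is where I use that $Y$ carries proper holomorphic images of strongly pseudoconvex domains and that I can push the image of a small convex set past $L_{j+1}$ to increase the boundary distance --- concretely, by composing with a suitable holomorphic map or, in the Stein case, using Oka--Weil/Runge approximation to steer the image into the complement of $L_{j+1}$. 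Then I would glue $f$ and $g$ across the overlap by a bounded linear splitting of the $\dibar$-equation (a Cartan-type decomposition on the convex configuration $(\overline{X_c},\overline B)$), obtaining the desired $f'$ on $\overline{X_{c'}}$ that is uniformly close to $f$ on $\overline{X_c}$. Finally, invoking Proposition~\ref{prop:generic} on the relatively compact set $\overline{X_c}\Subset X_{c'}$, a small generic perturbation of $f'$ makes it an embedding when $N>2n$ (immersion when $N\ge 2n$), and since being an embedding on a compact set is an open condition the perturbation can be kept compatible with all earlier constraints.

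The second case handles a \emph{critical level}, i.e.\ passing a Morse critical point of $\rho$. There the topology of the sublevel set changes and one must attach a handle along a totally real disc $D\subset X\setminus X_c$ whose core realizes the change of topology. The standard reduction, as in \S5.11 of \cite{Forstneric2017E}, is to thicken $D$ to a thin handlebody and extend $f$ across it using a Mergelyan-type approximation theorem on the (strongly pseudoconvex union totally real) set $\overline{X_c}\cup D$; this reduces the critical step to the noncritical bump situation already treated. Throughout, I would keep the approximations on the successive sets $\overline{X_{c_k}}$ so tight (say within $\epsilon_k$ with $\sum\epsilon_k<\infty$) that the sequence $f_k$ converges uniformly on compacts of $X$ to a holomorphic limit $f_\infty\colon X\to Y$; the enforced conditions $f_k(bX_{c_k})\subset Y\setminus L_k$ with $L_k\uparrow Y$ guarantee that $f_\infty$ is proper, and the open embedding/immersion conditions survive in the limit.

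The hard part will be the gluing across a convex bump while simultaneously controlling the image position relative to $L_{j+1}$ and, in the $q$-complete case, replacing the Stein toolkit by Andreotti--Grauert theory. Two points demand care. First, the bounded solution operator for $\dibar$ on the convex overlap must have norm independent of the (shrinking) geometry of the bumps, so that the gluing estimates close up uniformly over the infinitely many steps; this is the technical heart and is exactly what the dimension bound $N>2n$ (resp.\ $N\ge 2n$) and the constraint $q\le N-2n+1$ are calibrated to accommodate, since they guarantee that the relevant sections into the complement of $L_{j+1}$ exist and that the generic perturbation of Proposition~\ref{prop:generic} does not collide with the properness-forcing condition. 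Second, in the $q$-complete setting the approximation of $g$ on $\overline B$ by a map avoiding $L_{j+1}$ rests on the solvability of $\dibar$ with good estimates in the top degrees allowed by $q$-completeness, and one must check that the degree bookkeeping is compatible with the embedding codimension; verifying this compatibility, rather than any single estimate, is where I expect the real work to lie.
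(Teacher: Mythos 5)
Your proposal follows the wrong scheme for this theorem, and the place where it breaks is exactly the place where Theorem \ref{th:BDF2010} differs from Theorem \ref{th:density}. You exhaust the source by sublevel sets $X_c$ and, in each inductive step, extend the map across a convex bump $B$ from $\overline{X_c}$ to a strictly larger domain $\overline{X_{c'}}$, which requires producing a holomorphic map $g\colon \overline B\to Y$ close to $f$ on the attaching set $\overline{B\cap X_c}$ and with $g(\overline B)\subset Y\setminus L_{j+1}$. For an \emph{arbitrary} Stein (or $q$-complete) target there is no tool that produces such a $g$: Oka--Weil/Runge approximation gives maps into $\C^M$, not maps into $Y$ avoiding $L_{j+1}$, and ``composing with a suitable holomorphic map'' that pushes a convex set past $L_{j+1}$ is precisely the flexibility (density property, automorphisms, Oka property) that is assumed in Theorem \ref{th:density} but not here. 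The present theorem must apply, for instance, to $Y=\B^N$, which is Kobayashi hyperbolic: maps into $\B^N$ from a fixed larger domain form a normal family and obey Cauchy estimates on the overlap, so the domain-enlarging step cannot be carried out for the increasingly boundary-hugging maps your induction generates. The paper itself flags this obstruction: unless $Y$ is Oka, one cannot approximate maps $\overline X\to Y$ by holomorphic maps defined on any fixed bigger domain, and this is exactly why ``it is imperative that the initial map $f$ in Theorem \ref{th:BDF2010} be defined on all of $\overline X$.'' In short, your induction reproves (a version of) Theorem \ref{th:density}, not Theorem \ref{th:BDF2010}.

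The actual proof keeps the source domain fixed and moves the image instead. One chooses a strongly $q$-convex Morse exhaustion $\rho\colon Y\to\R_+$ (strongly plurisubharmonic when $q=1$) and proves a lifting lemma: any holomorphic map $f\colon \overline X\to Y$ can be approximated, uniformly on a given compact $\Ocal(X)$-convex set $K\subset X$ and with $\rho\circ \tilde f>\rho\circ f-\delta$ on all of $X$, by a holomorphic map $\tilde f\colon \overline X\to Y$ satisfying $\rho\circ \tilde f>\rho\circ f+c$ on $bX$; the tools are special holomorphic peaking functions on $X$ (to localize the lifting near boundary points) and holomorphic sprays, i.e.\ a nonlinear $\dibar$-gluing. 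Iterating lifts the boundary values up the exhaustion of $Y$, which forces properness of the limit, while convergence is only uniform on compacts in $X$ and the domain never changes; Proposition \ref{prop:generic} is invoked at each stage to keep the maps embeddings ($N>2n$) or immersions ($N\ge 2n$). This also corrects your accounting of the hypotheses: the condition $q\le N-2n+1$ is not about ``$\dibar$ degree bookkeeping,'' but guarantees that the Levi form of $\rho$ has at least $N-q+1\ge 2n$ positive eigenvalues at each point of $Y$, so there are enough independent positive directions to lift the $(2n-1)$-dimensional set $f(bX)$ without collapsing general position. Finally, your critical-level step (handle attachment along a totally real disc in the source) has no counterpart in this proof, since the source domain is fixed throughout; source handlebodies enter only in results like Theorem \ref{th:soft}, where one is allowed to deform the Stein structure on $X$.
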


In the special case when $Y$ is a domain in a Euclidean space, this  is
due to Dor \cite{Dor1995}. The papers \cite{DrinovecForstneric2007DMJ,DrinovecForstneric2010AJM}
include several more precise results in this direction and references to numerous previous works. 
Note that a continuous map $f\colon  \overline X \to Y$ from a compact strongly pseudoconvex domain
which is holomorphic on the open domain $X$, with values in an arbitrary complex manifold $Y$, can be 
approximated uniformly on $\overline X$ by holomorphic maps from small open neighborhoods of $\overline X$ in 
the ambient manifold $\wt X$, where the neighborhood depends on the map (see 
\cite[Theorem 1.2]{DrinovecForstneric2008FM} or \cite[Theorem 8.11.4]{Forstneric2017E}). 
However, unless $Y$ is an Oka manifold, it is impossible to approximate $f$ uniformly on $\overline X$ by 
holomorphic maps from a fixed bigger domain $X_1\subset \wt  X$ independent of the map. 
For this reason, it is imperative that the initial map $f$ in Theorem \ref{th:BDF2010} 
be defined on all of $\overline X$.

One of the main techniques used in the proof of Theorem \ref{th:BDF2010} are 
special holomorphic peaking functions on $X$. The second tool 
is the method of holomorphic sprays developed in the context of Oka theory; this is essentially
a nonlinear version of the $\dibar$-method. 

Here is the main idea of the proof of Theorem \ref{th:BDF2010}. 
Choose a strongly $q$-convex Morse exhaustion function $\rho\colon Y\to\R_+$.
(When $q=1$, $\rho$ is strongly plurisubharmonic.)
By using the mentioned tools, one can approximate any given holomorphic 
map $f\colon \overline X\to Y$ uniformly on compacts in $X$ by another holomorphic map $\tilde f\colon \overline X\to Y$
such that $\rho\circ \tilde f > \rho\circ f + c$ holds on $bX$ for some constant $c>0$ depending only on 
the geometry of $\rho$ on a given compact set $L\subset Y$ containing $f(\overline X)$.
Geometrically speaking, this means that we lift the image of the boundary of $X$  in $Y$ 
to a higher level of the function $\rho$ by a prescribed amount.
At the same time, we can ensure that $\rho\circ \tilde f > \rho \circ  f -\delta$ on $X$ 
for any given $\delta>0$, and that $\tilde f$ approximates $f$ as closely as desired on a given 
compact $\Ocal(X)$-convex subset $K\subset X$. 
By Proposition \ref{prop:generic} we can ensure that our maps are embeddings. 
An inductive application of this technique yields
a sequence of holomorphic embeddings $f_k\colon\overline X\hra Y$
converging to a proper holomorphic embedding $X\hra Y$.
The same construction gives proper holomorphic immersions when $N\ge 2n$.

%
%
\subsection{On the Hodge Conjecture for $q$-complete manifolds}\label{ss:Hodge}
A more precise analysis of the proof of Theorem \ref{th:BDF2010} 
was used by Smrekar, Sukhov and the author  \cite{FSS2016}
to show the following result along the lines of the Hodge conjecture.

\begin{theorem}
If $Y$ is a $q$-complete complex manifold of dimension $N$ and of finite topology 
such that $q<N$ and the number $N+q-1=2p$ is even, then every cohomology 
class in $H^{N+q-1}(Y;\Z)$ is Poincar\'e dual to an analytic cycle in $Y$ consisting of 
proper holomorphic images of the ball  $\B^p\subset \C^p$.
If the manifold $Y$ has infinite topology, the same result holds for elements of the group
$\Hscr^{N+q-1}(Y;\Z) = \lim_j H^{N+q-1}(M_j;Z)$ where $\{M_j\}_{j\in \N}$ is an exhaustion of $Y$ 
by compact smoothly bounded domains.
\end{theorem}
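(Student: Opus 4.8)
The plan is to translate the statement into a geometric realization problem by Poincar\'e--Lefschetz duality and then to realize the resulting relative cycles by the boundary--pushing construction underlying Theorem \ref{th:BDF2010}, operated at the top of its admissible dimension range. I would first reduce the infinite--topology case to the finite one. Fix a $q$--convex Morse exhaustion $\rho\colon Y\to\R$ and set $M_j=\{\rho\le c_j\}$ for regular values $c_j\uparrow\infty$; then $\Hscr^{N+q-1}(Y;\Z)=\varprojlim_j H^{N+q-1}(M_j;\Z)$ is assembled from the compact pieces, so it suffices to realize a class on each $M_j$ compatibly with the restriction maps and to let the balls escape to infinity along the exhaustion, which forces the limiting maps to be proper on all of $Y$. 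From here on I assume $Y$ has finite topology, so that $Y$ deformation retracts onto a single compact smoothly bounded strongly $q$--pseudoconvex domain $M\Subset Y$ with $H^{N+q-1}(Y;\Z)=H^{N+q-1}(M;\Z)$.

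Next I would pass from cohomology to cycles. By the Morse theory of $q$--convex exhaustions (the Andreotti--Grauert index bound), every critical point of $\rho$ has Morse index at most $N+q-1$, so $Y$ has the homotopy type of a CW complex of that dimension and $H^{N+q-1}$ is the top nonvanishing cohomology. Poincar\'e--Lefschetz duality identifies $H^{N+q-1}(M;\Z)$ with the relative homology group $H_{2N-(N+q-1)}(M,bM;\Z)$, whose generators are the co--cores of the top--index handles of $\rho$; each such co--core is a disc, which I would model on a smoothly embedded $\overline{\B^p}$ with totally real interior and with $b\B^p$ contained in a prescribed sublevel set of $\rho$. A given integral class $\alpha$ is then represented by an explicit integer combination of such discs.

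The analytic heart is to deform each of these totally real discs into a proper holomorphic image of $\B^p$ lying in the same relative class. For this I would apply Theorem \ref{th:BDF2010} with $X=\B^p$: beginning from a continuous map $\overline{\B^p}\to Y$ that is holomorphic near the relevant $\Ocal$--convex compact set and represents the given co--core, one pushes the image of $b\B^p$ to ever higher levels of $\rho$ by means of the special peaking functions and holomorphic sprays described there, arranging $\rho\circ\tilde f>\rho\circ f+c$ on $b\B^p$ at each step, so that the limit is a proper holomorphic map; a generic perturbation from Proposition \ref{prop:generic} then makes it an immersion, and an embedding when $q>1$. Since homotopic maps carry the same relative class, the analytic disc so produced still represents $\alpha$ under duality. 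The feature demanding the \emph{more precise} analysis is that $n=\dim\B^p$ is here the largest value admitted by the hypothesis $q\le N-2n+1$ of Theorem \ref{th:BDF2010}, so the construction runs at the very edge of that range, where the restriction of the $q$--convex $\rho$ to a complex $p$--dimensional image need not be strictly plurisubharmonic and the boundary--pushing step is no longer governed by a genuine plurisubharmonic exhaustion.

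I expect this borderline behaviour to be the main obstacle. The remedy is that the positive eigenspace of the Levi form of $\rho$ has complex dimension $N-q+1$, which is still large enough to contain the tangent spaces of a complex $p$--dimensional image; by keeping the image in general position so that it stays tangent to this positive cone --- possible precisely because one needs only an immersed cycle representing a homology class, not an embedding of a fixed submanifold --- the restriction of $\rho$ to the image remains strictly plurisubharmonic near $b\B^p$, and the spray step again raises $\rho$ along the boundary. Maintaining this tangency holomorphically throughout the inductive pushing, while simultaneously preserving both the homology class and properness, is the delicate point. Granting it, the integer combination of the resulting discs is an analytic cycle Poincar\'e dual to $\alpha$; carrying out the whole construction compatibly along the exhaustion $\{M_j\}$, with each ball eventually leaving every $M_j$, yields the statement in the infinite--topology case as well.
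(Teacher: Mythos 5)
Your skeleton --- reduce by $q$-convex Morse theory and Lefschetz duality to realizing co-core classes of the top-index handles, then produce holomorphic representatives by the boundary-lifting scheme of Theorem \ref{th:BDF2010} --- is indeed the route taken in \cite{FSS2016} (the survey itself contains no proof; it only records that the result follows from ``a more precise analysis'' of that proof). But there are genuine gaps in the execution.

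First, dimensions. A class in $H^{N+q-1}(Y;\Z)$ on a manifold of real dimension $2N$ is dual to cycles of real dimension $2N-(N+q-1)=N-q+1$, as you yourself compute; so the analytic cycle must have complex dimension $(N-q+1)/2=N-p$, and the co-cores of the index-$(N+q-1)$ handles are $(N-q+1)$-dimensional discs. Yet you model these co-cores on $\overline{\B^p}$, whose real dimension is $N+q-1$; for $q>1$ the two numbers differ. (In \cite{FSS2016} the ball dimension is $\frac{1}{2}(N-q+1)$; the normalization printed in the survey's statement clashes with itself, and your write-up inherits the clash without noticing it.) The mismatch is not cosmetic: with ball dimension $d=(N-q+1)/2$ the hypothesis $q\le N-2d+1$ of Theorem \ref{th:BDF2010} holds with equality --- the borderline you allude to --- whereas with ball dimension $p=(N+q-1)/2$ it reads $q\le 2-q$ and fails for every $q>1$, so the lifting theorem would not apply to your discs at all.

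Second, two analytic gaps that survive any fix of the dimensions. (a) Initialization: Theorem \ref{th:BDF2010} requires the initial map to be holomorphic on all of the open domain and continuous up to the boundary; starting from a map that is merely ``holomorphic near the relevant $\Ocal(X)$-convex compact set'' is the privilege of Oka or density-property targets (Definition \ref{def:universal}(2), Theorem \ref{th:density}), and the survey stresses immediately after Theorem \ref{th:BDF2010} that for a general Stein or $q$-complete $Y$ this is impossible. Your starting map parametrizes a totally real co-core disc, hence is holomorphic nowhere, and you never explain how to replace it by a genuinely holomorphic one; in \cite{FSS2016} one instead starts from a small \emph{complex} $d$-dimensional disc through each top-index critical point, chosen transverse to the descending (core) disc. (b) Preservation of the class: your key assertion ``since homotopic maps carry the same relative class'' is false in this setting. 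The relative (Borel--Moore) class of the image of a proper map is invariant only under \emph{proper} homotopies, while the homotopy furnished by Theorem \ref{th:BDF2010} passes through maps that are far from proper; properness appears only in the limit, and the theorem, used as a black box, gives no control over how the ends of the limit image wind through the compact part of $Y$. This is exactly why a more precise analysis of the \emph{proof} is needed: one keeps track of the algebraic intersection number of the image with the compact cycles dual to the co-cores, using the quantitative feature $\rho\circ f_{k+1}>\rho\circ f_k-\delta_k$ of the lifting lemma so that, once the boundary has been pushed above the $\rho$-levels containing those cycles, later steps create no new intersections and the single transverse intersection point supplied by the initial complex disc persists to the limit. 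This bookkeeping, which is the heart of \cite{FSS2016}, is absent from your argument. Finally, the difficulty you do single out --- keeping the image tangent to the positive cone of the Levi form so that $\rho$ restricted to the image stays plurisubharmonic --- is a misdiagnosis: the lifting method never needs $\rho\circ f$ to be plurisubharmonic (only the eigenvalue count $N-q+1\ge 2d$, which at the correct dimension holds with equality), and one cannot constrain the tangent planes of a complex submanifold to a prescribed cone while deforming it in any case.
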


Note that  $H^{N+q-1}(Y;\Z)$ is the highest dimensional a priori nontrivial
cohomology group of a $q$-complete manifold $Y$ of dimension $N$.
We do not know  whether a similar result holds for lower dimensional cohomology groups
of a $q$-complete manifold. In the special case when $Y$ is a Stein manifold,
the situation is better understood thanks to the Oka-Grauert principle, 
and the reader can find appropriate references in the paper \cite{FSS2016}.

%
%
\subsection{Complete bounded complex submanifolds}\label{ss:complete}
There are interesting recent constructions of properly embedded complex submanifolds
$X\subset \B^N$ of the unit ball in $\C^N$ (or of pseudoconvex domains in $\C^N$) which are {\em complete}
in the sense that every curve in $X$ terminating on the sphere $b\B^N$ has infinite length.
Equivalently, the metric on $X$, induced from the Euclidean metric on $\C^N$ by the embedding
$X\hra \C^N$, is a complete metric.

The question whether there exist complete bounded complex submanifolds 
in Euclidean spaces was asked by Paul Yang in 1977.
The first such examples were provided by Jones \cite{Jones1979PAMS} in 1979.
Recent results on this subject are due to
Alarc{\'o}n  and the author \cite{AlarconForstneric2013MA}, Alarc{\'o}n and L{\'opez} \cite{AlarconLopez2016}, 
Drinovec Drnov{\v s}ek \cite{Drinovec2015JMAA}, Globevnik \cite{Globevnik2015AM,Globevnik2016JMAA,Globevnik2016MA},  
and Alarc{\'o}n et al.\ \cite{AlarconGlobevnik2017,AlarconGlobevnikLopez2016Crelle}. 
In \cite{AlarconForstneric2013MA} it was shown that any bordered Riemann surface 
admits a proper complete holomorphic immersion into $\B^2$ and embedding into $\B^3$ 
(no change of the complex structure on the surface is necessary). 
In \cite{AlarconGlobevnik2017} the authors showed that properly embedded complete 
complex curves in the ball $\B^2$ can have any topology, but their method (using holomorphic
automorphisms) does not allow one to control the complex structure of the examples.
Drinovec Drnov{\v s}ek \cite{Drinovec2015JMAA} proved that every strongly 
pseudoconvex domain embeds as a complete complex submanifold of a high 
dimensional ball. Globevnik proved \cite{Globevnik2015AM,Globevnik2016MA} that 
any pseudoconvex domain in $\C^N$ for $N>1$ can be foliated by complete complex hypersurfaces 
given as level sets of a holomorphic function, and Alarc{\'o}n showed \cite{Alarcon2018}
that there are nonsingular foliations of this type goven as level sets of a holomorphic function
without critical points. Furthermore, there is a complete proper holomorphic embedding
$\D\hra\B^2$ whose image contains any given discrete subset of $\B^2$  \cite{Globevnik2016JMAA},
and there exist complex curves of arbitrary topology in $\B^2$ satisfying this property \cite{AlarconGlobevnik2017}.
The constructions in these papers, except those in \cite{Alarcon2018,Globevnik2015AM,Globevnik2016MA}, 
rely on one of the following two methods: 
\begin{itemize}
\item[\rm (a)] Riemann-Hilbert boundary values problem (or holomorphic peaking functions in the
case of higher dimensional domains considered in \cite{Drinovec2015JMAA}); 
\vspace{1mm}
\item[\rm (b)] holomorphic automorphisms of the ambient space $\C^N$. 
\end{itemize}
Each of these methods can be used to increase the intrinsic 
boundary distance in an embedded or immersed submanifold. The first method has the advantage
of preserving the complex structure, and the disadvantage of introducing self-intersections in the double dimension or below. 
The second method is precisely the opposite --- it keeps embeddedness, but does not
provide any control of the complex structure since one must cut away pieces of the image manifold 
to keep it suitably bounded. The first of these methods has recently been applied
in the theory of minimal surfaces in $\R^n$; we refer to the
papers \cite{AlarconDrinovecForstnericLopez2015PLMS,AlarconDrinovecForstnericLopez2017TAMS,
AlarconForstneric2015MA} and the references therein. On the other hand, ambient automorphisms
cannot be applied in minimal surface theory since the only class of self-maps of $\R^n$  $(n>2)$
mapping minimal surfaces to minimal surfaces are the rigid affine linear maps.

Globevnik's method in \cite{Globevnik2015AM,Globevnik2016MA} is 
different from both of the above. He showed that for every integer $N>1$ there is a holomorphic function $f$ on 
the ball $\B^N$ whose real part $\Re f$ is unbounded on every path of finite length that ends on $b\B^N$.
It follows that every level set $M_c=\{f=c\}$ is a closed complete complex hypersurface
in $\B^N$, and $M_c$ is smooth for most values of $c$ in view of Sard's lemma.
The function $f$ is constructed such that its real part grows sufficiently fast on a certain
labyrinth $\Lambda\subset \B^N$, consisting of  pairwise disjoint closed polygonal domains in 
real affine hyperplanes, such that every curve in $\B^N\setminus \Lambda$ which terminates on 
$b\B^N$ has infinite length. The advantage of his method is that it gives an affirmative
answer to Yang's question in all dimensions and codimensions.
The disadvantage is that one cannot control the topology or 
the complex structure of the level sets. By using instead holomorphic automorphisms in 
order to push a submanifold off the labyrinth $\Lambda$, Alarc{\'o}n et al.\ 
\cite{AlarconGlobevnikLopez2016Crelle} succeeded to 
obtain partial control of the topology of the embedded submanifold,
and complete control in the case of complex curves \cite{AlarconGlobevnik2017}.
Finally, by using the method of constructing noncritical holomorphic functions
due to Forstneri\v c \cite{Forstneric2003AM}, Alarc{\'o}n \cite{Alarcon2018} improved Globevnik's main 
result from  \cite{Globevnik2015AM}
by showing that every closed complete complex hypersurface in the ball $\B^n$ $(n>1)$ 
is a leaf in a nonsingular holomorphic foliation of $\B^n$  by closed complete complex hypersurfaces.

By using the labyrinths constructed in \cite{AlarconGlobevnikLopez2016Crelle,Globevnik2015AM}
and methods of Anders\'en-Lempert theory, 
Alarc{\'o}n and the author showed in \cite{AlarconForstneric2018PAMS} that there exists 
a complete injective holomorphic immersion $\mathbb{C}\to\mathbb{C}^2$ whose image is 
everywhere dense in $\mathbb{C}^2$ \cite[Corollary 1.2]{AlarconForstneric2018PAMS}. 
The analogous result holds for any closed complex submanifold 
$X\subsetneqq \mathbb{C}^n$ for $n>1$ (see \cite[Theorem 1.1]{AlarconForstneric2018PAMS}).
Furthermore, if $X$ intersects the ball $\mathbb{B}^n$ and $K$ is a connected 
compact subset of $X\cap\mathbb{B}^n$, then there is a Runge domain $\Omega\subset X$ containing $K$ 
which admits a complete injective holomorphic immersion $\Omega\to\mathbb{B}^n$ whose image is 
dense in $\mathbb{B}^n$.

%
%
\subsection{Submanifolds with exotic boundary behaviour}\label{ss:exotic}
The boundary behavior of proper holomorphic maps between bounded  
domains with smooth boundaries in complex Euclidean spaces has been studied extensively;
see  the recent survey by Pinchuk et al.\ \cite{Pinchuk2017}.
It is generally believed, and has been proved under a variety of additional conditions,
that proper holomorphic maps between relatively compact smoothly bounded domains 
of the same dimension always extend smoothly
up to the boundary. In dimension $1$ this is the classical theorem of Carath{\'e}odory
(see \cite{Caratheodory1913MA} or \cite[Theorem 2.7]{Pommerenke1992}).
On the other hand, proper holomorphic maps into higher dimensional domains
may have rather wild boundary behavior. For example, Globevnik \cite{Globevnik1987MZ} 
proved in 1987 that, given $n\in \N$, if $N\in\N$ is sufficiently large then there 
exists a continuous map $f\colon \overline \B^n \to \overline \B^N$ which is holomorphic in 
$\B^n$ and satisfies $f(b\B^n)=b\B^N$. Recently, the author  \cite{Forstneric2017Sept}  constructed 
a properly embedded holomorphic disc $\D\hra \B^2$ in the $2$-ball 
with arbitrarily small area (hence it is the zero set of a bounded holomorphic function on $\mathbb{B}^2$
according to Berndtsson \cite{Berndtsson1980}) which extends holomorphically across the boundary of the disc,
with the exception of one boundary point, such that its boundary curve is injectively immersed
and everywhere dense in the sphere $b\mathbb{B}^2$. Examples of proper (not necessarily embedded) 
discs with similar behavior were found earlier by Globevnik and Stout \cite{GlobevnikStout1986}.


\section{The soft Oka principle for proper holomorphic embeddings}
\label{sec:soft}

By combining the technique in the proof of Theorem \ref{th:BDF2010} with methods from the papers 
by Slapar and the author \cite{ForstnericSlapar2007MRL,ForstnericSlapar2007MZ} 
one can prove the following seemingly new result.

%
%
\begin{theorem}\label{th:soft}
Let $(X,J)$ and $Y$ be Stein manifolds, where $J\colon TX\to TX$ denotes the complex structure operator 
on $X$.  If $\dim Y > 2\dim X$  then for every continuous map $f\colon X\to Y$ there exists a
Stein structure $J'$ on $X$, homotopic to $J$, and a proper holomorphic embedding $f'\colon (X,J')\hra Y$
homotopic to $f$. If $\dim Y\ge 2\dim X$ then $f'$ can be chosen a proper holomorphic
immersion having only simple double points. The same holds if the manifold $Y$ is $q$-complete 
for some $q\in \{1,2,\ldots, \dim Y-2\dim X+1\}$, where $q=1$ corresponds to Stein manifolds.
\end{theorem}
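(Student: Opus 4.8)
The plan is to combine two established machines. First, the results of Slapar and the author \cite{ForstnericSlapar2007MRL,ForstnericSlapar2007MZ} supply a flexible-side input: given an (almost complex or merely smooth) manifold $X$ carrying a Stein structure $J$, one may deform $J$ through Stein structures while simultaneously passing the sublevel sets of a strongly plurisubharmonic exhaustion function through critical points by means of controlled handle attachments. Second, Theorem \ref{th:BDF2010} provides the analytic engine: on any smoothly bounded strongly pseudoconvex domain, a continuous map that is holomorphic inside can be approximated on compacts by a proper holomorphic embedding (immersion in the double dimension) into $Y$, with the same conclusion for $q$-complete $Y$ in the stated range of $q$. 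The soft Oka principle I want to prove is exactly what one gets by interleaving these two operations in an exhaustion scheme, so that the deformation of $J$ is what removes the topological obstructions that otherwise block a global proper embedding.

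First I would fix a smooth exhaustion of $X$ and, using \cite{ForstnericSlapar2007MZ}, arrange an exhaustion $X = \bigcup_j X_j$ by relatively compact, smoothly bounded, strongly pseudoconvex domains in a Stein structure $J'$ homotopic to $J$, where $X_{j+1}$ is obtained from $X_j$ either by attaching a convex bump (a noncritical step) or by attaching a handle along a totally real disc (a critical step, corresponding to passing a Morse critical point of the exhaustion function). The key point imported from the Slapar--Forstneri\v c theory is that one has freedom in choosing the attaching spheres of the handles, so that the resulting homotopy type of $(X,J')$ and the embedding data can be prescribed to be compatible with the given continuous map $f$; this is precisely the mechanism by which a deformation of the Stein structure is traded for the ability to realize $f$ up to homotopy. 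Simultaneously I would keep track of a continuous map $F\colon X\to Y$ homotopic to $f$ whose restriction to each $X_j$ is to be approximated by holomorphic data.

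Next I would run the inductive approximation. On $X_1$, approximate $F$ by a proper holomorphic embedding of a neighborhood by Proposition \ref{prop:generic} together with Theorem \ref{th:BDF2010}. In the inductive step, assume a holomorphic embedding $f_j\colon \overline{X_j}\hra Y$ close to $F$ is given. For a noncritical bump step I would apply the boundary-lifting construction of Theorem \ref{th:BDF2010} to push the image of $bX_j$ to a higher level of a strongly $q$-convex exhaustion $\rho$ of $Y$, extend across the bump by the spray/$\dibar$ techniques, and restore embeddedness (immersedness) by a generic perturbation via Proposition \ref{prop:generic}. For a critical handle step I would first use the Mergelyan-type reduction mentioned after Theorem \ref{th:density} to reduce the handlebody to a bump attached along a totally real core, then proceed as in the noncritical case; the homotopy of $J$ produced in the first paragraph is what makes the relevant totally real disc available. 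Arranging that $\rho\circ f_{j+1}$ grows on $bX_{j+1}$ while remaining controlled on $X_j$ guarantees that the limit $f'=\lim_j f_j$ is a proper holomorphic embedding of $(X,J')$ into $Y$, and that $f'$ is homotopic to $F$ hence to $f$.

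The main obstacle I expect is the bookkeeping at critical steps: one must simultaneously (i) deform $J$ so that the attaching sphere of each handle bounds an appropriately oriented, totally real disc in the correct free homotopy class relative to the map $f$, (ii) keep the boundary image $f_j(bX_j)$ in the region $Y\setminus L$ (with $L$ an exhausting $\Ocal(Y)$-convex set) so that properness is not destroyed, and (iii) maintain the embedding (or, in the double dimension, the simple-double-point immersion) condition as the topology of the domain changes. Reconciling (i) with the analytic lifting of Theorem \ref{th:BDF2010} is the delicate part, since the handle attachment in \cite{ForstnericSlapar2007MRL,ForstnericSlapar2007MZ} and the boundary-lifting in \cite{DrinovecForstneric2010AJM} were developed independently and must be made to operate on overlapping neighborhoods without interfering. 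In the double-dimension immersion case I would additionally invoke the thin-bump position argument from \cite{Forstneric-immersions}, replacing each attaching set by a small thickening of a $(2\dim X-1)$-dimensional object, to ensure that the only self-intersections produced are transverse simple double points. The $q$-complete case requires only that $\rho$ be strongly $q$-convex rather than strongly plurisubharmonic, which is already accommodated by Theorem \ref{th:BDF2010} for $q\le \dim Y-2\dim X+1$.
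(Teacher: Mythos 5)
Your overall architecture (induction over a strongly pseudoconvex exhaustion of $X$, with noncritical bump steps and critical handle steps, combining \cite{ForstnericSlapar2007MZ} with Theorem \ref{th:BDF2010}) is the same as the paper's, but your noncritical step has a genuine gap, and it is located exactly at the point the theorem is designed to get around. You propose to ``extend across the bump by the spray/$\dibar$ techniques'' and then perturb to an embedding. For a general Stein (or $q$-complete) target $Y$ this extension is impossible: gluing by sprays first requires a holomorphic map on the bump that approximates $f_j$ on the attaching set, and producing such a local approximate extension is precisely what needs the Oka property or the density property of $Y$ --- that is how the bump step is carried out in the proof of Theorem \ref{th:density}, via automorphisms of $Y$, a tool unavailable here. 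The paper stresses this: unless $Y$ is Oka, a map holomorphic on $\overline X$ cannot in general be approximated by maps holomorphic on any \emph{fixed} larger domain, which is why the initial map in Theorem \ref{th:BDF2010} must be holomorphic on all of $\overline X$. The spray/$\dibar$ machinery in that proof never enlarges the domain; it only lifts the boundary image $f_j(bX_j)$ to higher levels of the exhaustion of $Y$ while the domain stays fixed.

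The missing idea is the squeezing trick of \cite{ForstnericSlapar2007MZ}, and it is also where the deformation of $J$ actually occurs --- not, as you assume, only at critical steps to make totally real discs available. In a noncritical extension $X_0\subset X_1$ one takes neighborhoods $U_0\subset U_0'$ of $\overline{X_0}$ on which the current map $f_0$ is a holomorphic embedding, and a diffeomorphism $\phi$ of $X$, diffeotopic to the identity by a diffeotopy fixed on $U_0\cup(X\setminus U_1')$, with $\phi(U_1)\subset U_0'$. Then $f_0\circ\phi$ is holomorphic on $U_1\supset\overline{X_1}$ with respect to the pulled-back structure $\phi^*J$, so no extension problem ever arises; only afterwards does one apply the lifting procedure of Theorem \ref{th:BDF2010} to achieve the conditions \eqref{eq:lifting}, which give properness in the limit. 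The critical case is then handled essentially as you describe (totally real disc $M$, Mergelyan approximation on a handlebody $H$, mapping $H\setminus\overline{X_0}$ into $L_1\setminus L_0$) and reduces to the noncritical case; this works for an arbitrary target and involves choosing $M$, not deforming $J$. Two consequences for your write-up: you cannot fix $J'$ and the exhaustion once and for all at the outset, since $J'$ is the limit of the pullbacks by squeezing diffeotopies chosen during the induction, intertwined with the analytic steps; and the thin-bump argument from \cite{Forstneric-immersions} is not needed in the double dimension, because once the noncritical step is done by squeezing rather than by gluing, simple double points come from general position via Proposition \ref{prop:generic}.
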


Intuitively speaking, every Stein manifold $X$ embeds properly holomorphically
into any other Stein manifold $Y$ of dimension $\dim Y >2\dim X$ up to a change of the Stein structure on $X$.
The main result of  \cite{ForstnericSlapar2007MZ} 
amounts to the same statement for holomorphic maps (instead of proper embeddings), but without any 
hypothesis on the target complex manifold $Y$. In order to obtain {\em proper} holomorphic maps $X\to Y$, 
we need a suitable geometric hypothesis on $Y$ in view of the examples of noncompact (even Oka) 
manifolds without any closed complex subvarieties (see \cite[Example 9.8.3]{Forstneric2017E}).

The results from \cite{ForstnericSlapar2007MRL,ForstnericSlapar2007MZ} 
were extended by Prezelj and Slapar \cite{PrezeljSlapar2011} to $1$-convex source manifolds.
For Stein manifolds $X$ of complex dimension $2$, these results 
also stipulate a change of the underlying $\Cscr^\infty$ structure on $X$.
It was later shown by Cieliebak and Eliashberg  that such change is not necessary if one begins with an integrable
Stein structure; see \cite[Theorem 8.43 and Remark 8.44]{CieliebakEliashberg2012}.
For the constructions of exotic Stein structures on smooth orientable $4$-manifolds, in particular
on $\R^4$, see Gompf \cite{Gompf1998,Gompf2005,Gompf2017GT}.

\begin{proof}[Sketch of proof of Theorem \ref{th:soft}]
In order to fully understand the proof, the reader should be familiar with 
\cite[proof of Theorem 1.1]{ForstnericSlapar2007MZ}. (Theorem 1.2 in the same paper gives
an equivalent formulation where one does not change the Stein structure on $X$,
but instead finds a desired holomorphic map on a Stein Runge domain
$\Omega\subset X$ which is diffeotopic to $X$. An exposition  is also available in 
\cite[Theorem 8.43 and Remark 8.44]{CieliebakEliashberg2012} and \cite[\S 10.9]{Forstneric2017E}.) 

We explain the main step in the case $\dim Y > 2\dim X$; the theorem follows by using it inductively
as in \cite{ForstnericSlapar2007MZ}. An interested reader is invited to provide the details.

Assume that $X_0\subset X_1$ is a pair of relatively compact, smoothly 
bounded, strongly pseudoconvex domains in $X$ such that there exists a
strongly plurisubharmonic Morse function $\rho$  on an open set
$U\supset \overline {X_1\setminus X_0}$ in $X$ satisfying
\[
	X_0 \cap U = \{x\in U\colon \rho(x)<a\},\quad 
	 X_1 \cap U = \{x\in U \colon \rho(x)<b\}, 
\]
for a pair of constants $a<b$ and $d\rho\ne 0$ on $bX_0\cup bX_1$.
Let $L_0\subset L_1$ be a pair of compact sets in $Y$.
(In the induction, $L_0$ and $L_1$ are sublevel sets of a strongly $q$-convex exhaustion 
function on $Y$.) Assume that $f_0\colon X\to Y$ is a continuous map whose restriction to a neighborhood of $\overline X_0$ 
is a $J$-holomorphic embedding satisfying $f_0(bX_0) \subset Y\setminus L_0$. 
The goal is to find a new Stein structure $J_1$ on $X$, homotopic to 
$J$ by a smooth homotopy that is fixed in a neighborhood of $\overline X_0$,
such that  $f_0$ can be deformed to a map $f_1\colon X\to Y$ whose restriction 
to a neighborhood of $\overline X_1$ is a $J_1$-holomorphic embedding 
which approximates $f_0$ uniformly on $\overline X_0$ as closely as desired and satisfies
\begin{equation}\label{eq:lifting}
	f_1(\overline {X_1\setminus X_0})\subset Y\setminus L_0, \qquad
	f_1(bX_1) \subset Y\setminus L_1.
\end{equation}
An inductive application of this result proves Theorem \ref{th:soft} as in \cite{ForstnericSlapar2007MZ}. 
(For the case $\dim X=2$, see  \cite[Theorem 8.43 and Remark 8.44]{CieliebakEliashberg2012}.)

By subdividing the problem into finitely many steps of the same kind, it suffices 
to consider the following two basic cases:
\begin{itemize}
\item[\rm (a)] {\em The noncritical case:} $d\rho\ne 0$ on $\overline{X_1\setminus X_0}$.
In this case we say that $X_1$ is a {\em noncritical strongly pseudoconvex extension} of $X_0$.
\vspace{1mm}
\item[\rm (b)] {\em The critical case:} $\rho$ has exactly one critical point $p$ in $\overline{X_1\setminus X_0}$.
\end{itemize}
Let $U_0 \subset U'_0 \subset X$ be a pair of small open neighborhoods of $\overline X_0$ such that 
$f_0$ is an embedding on $U'_0$. Also, let $U_1\subset U'_1\subset X$ 
be small open neighborhoods of $\overline X_1$. 

In case (a), there exists a smooth diffeomorphism $\phi\colon X\to X$ which is 
diffeotopic to the identity map on $X$ by a diffeotopy which is fixed on $U_0\cup (X\setminus U'_1)$
such that $\phi(U_1)\subset U'_0$. The map $\tilde f_0=f_0\circ \phi \colon X \to Y$ is then a holomorphic 
embedding on the set $U_1$ with respect to the Stein structure $J_1=\phi^*(J)$ on $X$ (the pullback of $J$ by $\phi$). 
Applying the lifting procedure in the proof of Theorem \ref{th:BDF2010} and 
up to shrinking $U_1$ around $\overline X_1$, we can homotopically deform $\tilde f_0$  to a continuous
map $f_1\colon X\to Y$ whose restriction to $U_1$ is a $J_1$-holomorphic embedding $U_1\hra Y$ 
satisfying conditions \eqref{eq:lifting}.

In case (b), the change of topology of the sublevel sets of $\rho$ at the critical point $p$
is described by attaching to the strongly pseudoconvex domain $\overline X_0$ a smoothly
embedded totally real  disc $M\subset X_1\setminus X_0$, with $p\in M$ and $bM\subset bX_0$,
whose dimension  equals the Morse index of $\rho$ at $p$.
As shown in \cite{Eliashberg1990,CieliebakEliashberg2012,ForstnericSlapar2007MZ},
$M$ can be chosen such that $\overline X_0\cup M$ has a basis of smooth strongly pseudoconvex
neighborhoods (handlebodies) $H$ which deformation retract onto $\overline X_0\cup M$
such that $X_1$ is a noncritical strongly pseudoconvex extension of $H$. 
Furthermore, as explained in   \cite{ForstnericSlapar2007MZ}, we can homotopically
deform the map $f_0\colon X\to Y$, keeping it fixed in some neighborhood of $\overline X_0$,
to a map that is holomorphic on $H$ and maps $H\setminus \overline X_0$ to 
$L_1\setminus L_0$. By Proposition \ref{prop:generic} we can assume that the new map is a 
holomorphic embedding on $H$. This reduces case (b) to case  (a). 

In the inductive construction, we alternate the application of cases (a) and (b).
If $\dim Y\ge 2\dim X$ then the same procedure applies to immersions.
\end{proof}

A version of this construction, for embedding open Riemann surfaces into $\C^2$ or $(\C^*)^2$
up to a deformation of their complex structure, 
can be found in the papers by Alarc\'on and L{\'o}pez \cite{AlarconLopez2013} and Ritter \cite{Ritter2014}.
However, they use holomorphic automorphisms in order to push the boundary curves
to infinity without introducing self-intersections of the image complex curve. 
The technique in the proof of Theorem \ref{th:BDF2010} will in general introduce self-intersections in double dimension.


\subsection*{Acknowledgements}
The author is supported  in part by the research program P1-0291 and grants J1-7256 and nd J1-9104
from ARRS, Republic of Slovenia. I wish to thank Antonio Alarc{\'o}n and Rafael Andrist for 
a helpful discussion concerning Corollary \ref{cor:harmonic} and the Schoen-Yau conjecture,
Barbara Drinovec Drnov{\v s}ek for her remarks on the exposition,
Josip Globevnik for the reference to the paper of Bo{\v z}in \cite{Bozin1999IMRN}, 
Frank Kutzschebauch for having proposed to include the material in \S\ref{ss:wild},
and Peter Landweber for his remarks which helped me to improve the language and presentation.


{\bibliographystyle{abbrv} \bibliography{references-Forstneric}}


\newpage
\noindent Franc Forstneri\v c

\noindent Faculty of Mathematics and Physics, University of Ljubljana, Jadranska 19, SI--1000 Ljubljana, Slovenia

\noindent Institute of Mathematics, Physics and Mechanics, Jadranska 19, SI--1000 Ljubljana, Slovenia

\noindent e-mail: {\tt franc.forstneric@fmf.uni-lj.si}

\end{document}